\def\ff{{\mathcal F}}
\def\pp{{\mathcal P}}
\def\ie{{\it i.e.} \ }
\def\ffi{\varphi}
\def\eps{\varepsilon}
\def\dst{\displaystyle}
\def\eg{{\it e.g.\ }}
\def\supp{{\mathrm{supp}\,}}
\def\C{{\mathbb{C}}}
\def\R{{\mathbb{R}}}
\def\d{\,{\mathrm{d}}}
\newcommand{\norm}[1]{{\left\|{#1}\right\|}}
\newcommand{\ent}[1]{{\left[{#1}\right]}}
\newcommand{\abs}[1]{{\left|{#1}\right|}}
\newcommand{\scal}[1]{{\left\langle{#1}\right\rangle}}
\newenvironment{notation}[1][]{\vskip1pt\noindent\rm\textit{Notation.}\ }{\rm\vskip1pt}
\newenvironment{definition}[1][]{\vskip3pt\noindent\sl\textbf{Definition.}\ }{\rm\vskip3pt}
\newenvironment{example}[1][]{\vskip3pt\noindent\textbf{Examples.}\ }{\rm\vskip3pt}
\newtheorem{lemma}{Lemma}[section]
\newtheorem{proposition}[lemma]{Proposition}
\newtheorem{theorem}[lemma]{Theorem}
\newtheorem{corollary}[lemma]{Corollary}
\newtheorem{remarknum}[lemma]{Remark}
\date{\today}
\begin{document}

\title{The Logvinenko-Sereda Theorem for the Fourier-Bessel transform}
\author{Saifallah Ghobber}

\address{S.\,G.\,: D\'epartement Math\'ematiques\\
Facult\'e des Sciences de Tunis\\
Universit\'e de Tunis El Manar\\
Campus Universitaire\\ 1060 Tunis\\
Tunisie\\
and Universit\'e d'Orl\'eans\\
Facult\'e des Sciences\\
MAPMO - F\'ed\'eration Denis Poisson\\ BP 6759\\ F 45067 Orl\'eans Cedex 2\\
France}
\email{Saifallah.Ghobber@math.cnrs.fr}

\author{Philippe Jaming}

\address{P.\,J.\,: Institut de Math\'ematiques de Bordeaux UMR 5251,
Universit\'e Bordeaux 1, cours de la Lib\'eration, F 33405 Talence cedex, France}

\email{Philippe.Jaming@math.u-bordeaux1.fr}

\begin{abstract}
The aim of this paper is to establish an analogue of Logvinenko-Sereda's theorem for the Fourier-Bessel transform
(or Hankel transform) $\ff_\alpha$ of order $\alpha>-1/2$. Roughly speaking, if we denote by $PW_\alpha(b)$ the
Paley-Wiener space of $L^2$-functions with Fourier-Bessel transform supported in $[0,b]$, then we show that
the restriction map $f\to f|_\Omega$ is essentially invertible on $PW_\alpha(b)$ if and only if $\Omega$
is sufficiently dense. Moreover, we give an estimate of the norm of the inverse map.

As a side result we prove a Bernstein type inequality for the Fourier-Bessel transform.
\end{abstract}

\subjclass{42A68;42C20}

\keywords{Fourier-Bessel transform, Hankel transform, uncertainty principle, strong annihilating pairs}

\thanks{The authors wish to thank Aline Bonami for valuable conversation and the anonymous referees for there careful reading of the manuscript that lead to an
improved presentation.\\
Research on this paper was started while the second author was at MAPMO, Universit\'e d'Orl\'eans.\\
This work was partially sponsored by the French-Tunisian cooperation
program PHC Utique/CMCU 10G 1503.}
\maketitle

\section{Introduction}
The classical uncertainty principle  was established by Heisenberg bringing a fundamental problem in quantum mechanics
to the point: The position and the momentum of particles cannot be both determined explicitly but only in a
probabilistic sense with a certain "uncertainty". The mathematical equivalent is that
 a function and its Fourier transform cannot both be arbitrarily localized. This is a fundamental
problem in time-frequency analysis.
Heisenberg did not give a precise mathematical formulation of the uncertainty principle, but this was done
in the late 1920s by Kennard \cite{ph:Ke} and Weyl (who attributes the result to Pauli)
\cite[Appendix 1]{ph:We}. This leads to the classical formulation of the uncertainty principle
in form of the lower bound of the product of the dispersions of a function and its Fourier transform
$$
\bigl\||x|f\bigr\|_{L^2(\R^d)}\,\bigl\||\xi|\ff(f)\bigr\|_{L^2(\R^d)}\geq c\bigl\|f\bigr\|^2_{L^2(\R^d)}.
$$
A considerable attention
has been devoted recently to discovering new mathematical formulations and new contexts for the uncertainty
principle (see the surveys \cite{BD, folland} and the book \cite{HJ} for other forms of the uncertainty principle).
Our aim here is to consider uncertainty principles in which
concentration is measured in sense of smallness of the support (the notion of \emph{annihilating pairs} in the
terminology of \cite{HJ} or \emph{qualitative uncertainty principles} in the terminology of
\cite[Section 7]{folland} which also surveys extensions of this notion to various generalizations of the Fourier
transform). Further, the transform under consideration is the
Fourier-Bessel transform (also known as the Hankel transform) on $\R^+$. This transform arises as {\it e.g.} a
generalization of the Fourier transform of a radial integrable function on Euclidean $d$-space as well as
from the eigenvalues expansion of a Schr\"{o}dinger operator.

\medskip

Let us now be more precise and describe our results. To do so, we need to introduce some notation.
For $1\leq p < \infty$ and $\alpha>-1/2$, we denote by $L^p_\alpha(\R^+)$ the Banach space consisting of measurable functions $f$ on $\R^+$ equipped with the norms
$$
\norm{f}_{L^p_\alpha}=\left( \int_0^\infty \abs{f(x)}^p\,\mbox{d}\mu_\alpha(x)  \right)^{1/p},
$$
where $\mbox{d}\mu_\alpha (x)=\frac{2\pi^{\alpha+1}}{\Gamma(\alpha+1)}x^{2\alpha+1}\,\mbox{d}x$.
For $f\in L^1_\alpha(\R^+)$, the Fourier-Bessel (or Hankel) transform is defined by
$$
\ff_\alpha(f) (y)=\int_0^\infty f(x)j_\alpha(2\pi xy)\d\mu_\alpha (x),
$$
where $j_\alpha$ is the 
Bessel function given by
$$
j_\alpha(x)=2^\alpha\Gamma(\alpha+1)\frac{J_\alpha (x)}{x^\alpha}
:=\Gamma(\alpha+1)\sum_{n=0}^{\infty}\frac{(-1)^n}{n!\Gamma(n+\alpha+1)}\left(\frac{x}{2}\right)^{2n}.
$$
Note that $J_\alpha$ is the Bessel function of the first kind and $\Gamma$ is the gamma function. The function
$j_\alpha$ is even and  analytic. It is well known  that the Fourier-Bessel
transform extends to an isometry on $ L^2_\alpha(\R^+)$ \ie
$$
 \|\ff_\alpha(f) \|_{L^2_\alpha}= \|f \|_{L^2_\alpha}.
$$

\medskip

Uncertainty principles for the Fourier-Bessel transform have been considered in various places,
{\it e.g.} \cite{Bo, RV} for a Heisenberg type inequality,  \cite{om1, om2} for the ``local uncertainty principle''
and Pitt's inequality or \cite{V}
for Hardy type uncertainty principles when concentration is measured in terms of fast decay.
Our main concern here is uncertainty principles of the following type:
\begin{center}
{\sl A function and its Fourier-Bessel transform cannot both have small support}.
\end{center}
 In other words, we are interested in the following adaptation of a well-known notion from Fourier analysis:
\begin{definition}\ \\
\label{num}
Let $S$, $\Sigma$ be two measurable subsets of $\R^+$. Then
\begin{itemize}
\item $(S,\Sigma)$ is a \emph{weak annihilating pair} if, $\supp f \subset S$  and $\supp \ff_\alpha(f)\subset \Sigma$ implies $f=0$.

\item $(S,\Sigma)$ is called a \emph{strong annihilating pair} if there exists $C_\alpha(S,\Sigma)$ such that
\begin{equation}\label{int,str}
\norm{f}^2_{L^2_\alpha}\leq C_\alpha(S,\Sigma)\Big(\norm{f}^2_{L^2_\alpha(S^c)} +\norm{\ff_\alpha(f)}^2_{L^2_\alpha(\Sigma^c)} \Big),
\end{equation}
\end{itemize}
where $A^c=\R^+\backslash A$. The constant $C_\alpha(S,\Sigma)$  will be called the \emph{annihilation constant of $(S,\Sigma)$}.
\end{definition}

Of course, every strong annihilating pair is also a weak one. Let us also recall that,
to prove that a pair $(S,\Sigma)$ is   strongly annihilating, it is enough to show that there exists a constant $D_\alpha(S,\Sigma)$ such that for every
$f \in L^2_\alpha(\R^+)$ whose Fourier-Bessel transform is supported in $\Sigma$,
$$
 \norm{f}_{L^2_\alpha}\leq D_\alpha(S,\Sigma)\norm{f}_{L^2_\alpha(S^c)}.
$$
In other words, if we denote by $PW_\alpha(\Sigma)=\left\{f\in L^2_\alpha(\R^+)\,:\supp\ff_\alpha(f)\subset\Sigma\right\}$ then the pair
$(S,\Sigma)$ is strongly annihilating if the restriction map $f\to f|_{S^c}$ is invertible and
$D_\alpha(S,\Sigma)$ is the norm of the inverse.

There are several examples of  the Uncertainty Principle of form
\eqref{int,str} for the Fourier transform $\ff$. One of them is the Amrein-Berthier theorem \cite{AB} which is a
quantitative version
of a result due to Benedicks \cite{Be}. In this theorem sets of finite measure play the role of small
sets, {\it i.e.} if a function $f$ is supported on a set of finite measure then $\ff(f)$ cannot be concentrated on a
set of finite measure unless $f$ is the zero function.
An added example is the Shubin-Vakilan-Wolff theorem \cite[Theorem 2.1]{SVW},
where so called $\eps$-thin sets are considered. The extension of the results of Benedicks-Amrein-Berthier and of the
Shubin-Vakilan-Wolff for the Fourier-Bessel transform were shown by the authors in \cite{GJ}.
Our first task here will be to slightly extend our version of Shubin-Vakilan-Wolff's theorem.

Another Uncertainty Principle which is of particular interest to us is the Logvinenko-Sereda theorem \cite{LS},
 see also \cite[page 102]{HJ} and \cite{Ko}. This result characterizes the sets $\Omega$ such that $(\Omega^c,[0,b])$
is an annihilating pair and gives the (essentially optimal) annihilation constant. In the case of the Fourier
transform, $\Omega$ is then the complement of a so called \emph{relatively dense} subset.
For the Fourier-Bessel transform, we adapt this notion as follows: {\sl a measurable subset $\Omega\subset \R^+$ is
called \emph{relatively dense} (for $\mu_\alpha$) if there exist $\gamma, a>0$ such that
\begin{equation}
\label{cond}
   \mu_\alpha(\Omega\cap [x-a,x+a])\ge \gamma \mu_\alpha([x-a,x+a]),
\end{equation}
for all $x\ge a$.}

Our main result is then the following:

\medskip

\noindent{\bf Theorem.}\ \\
{\sl Let $\alpha\geq 0$ and let $a,b,\gamma>0$. Then there is a constant $C(\alpha,a,b,\gamma)$ such that,
for every $f \in L^2_\alpha (\R^+)$ with $\supp \ff_\alpha (f) \subset [0,b]$ and every
$(\gamma,a)$-relatively dense subset $\Omega$ of $\;\R^+$,}
\begin{equation}\label{form}
  \norm{f}^2_{ L^2_\alpha(\Omega)} \geq
C(\alpha,a,b,\gamma) \norm{f}^2_{ L^2_\alpha}.
\end{equation}

\medskip

We will show in Lemma \ref{lemcn} that condition \eqref{cond} is also necessary for an inequality of
the form \eqref{form} to hold. Our proof is inspired by the proof in the Euclidean case by Kovrijkine \cite{Ko}
who obtained an essentially sharp estimate that is polynomial in $\gamma$ (rather then a previously known exponential one). This proof allows us to obtain an estimate on $C(\alpha,a,b,\gamma)$ as well.

\medskip

The remaining of the paper is organized as follows. Next section is devoted to some preliminaries on the
Fourier-Bessel transform and the corresponding ``translation'' operator. The section is completed with a version of
Bernstein's Inequality for the Fourier-Bessel transform. In section 3, we complete our previous extension of
Shubin-Vakilan-Wolff's theorem. In the last section, we prove the Logvinenko-Sereda Theorem for the Fourier-Bessel transform.

\section{Preliminaries}

In this section, we will fix some notation and prove a Bernstein type inequality for the Fourier-Bessel transform.

\subsection{Generalities}
 We will denote by $|x|$ and
$\scal{x,y}$ the usual norm and scalar product on $\R^d$. The Fourier transform
is defined for $f\in L^1(\R^d)$ by
$$
\ff(f)(\xi)=\int_{\R^d}f(x)e^{-2i\pi\scal{x,\xi}}\,\mbox{d}x.
$$
Note that $\|\ff(f)\|_{L^2(\R^d)}=\norm{f}_{L^2(\R^d)}$ and the definition of the Fourier
transform is extended from $f\in L^1(\R^d)\cap L^2(\R^d)$ to $L^2(\R^d)$ in the
usual way.  With this normalization,  if $f(x)=\tilde{f}(|x|)$ is a radial function on
$\R^d$, then $\ff(f)(\xi)=\ff_{d/2-1}(\tilde{f})(|\xi|)$.

If $S_d$ is a measurable set in $\R^d$, we will write $|S_d|$ for its Lebesgue measure.

For $\alpha>-1/2$, let us recall the \emph{Poisson representation formula}
$$
j_\alpha(x)=\frac{\Gamma(\alpha+1)}{\Gamma\left(\alpha+\frac{1}{2}\right)\Gamma\left(\frac{1}{2}\right)}
\int_{-1}^1 (1-s^2)^{\alpha-1/2}\cos(sx) \d x.
$$
Therefore, $j_\alpha$ is bounded with $|j_\alpha(x)|\leq j_\alpha(0)=1$.
As a consequence,
\begin{equation}
\label{eq:L1infty}
\norm{\ff_\alpha(f)}_\infty\leq \norm{f}_{L^1_\alpha}.
\end{equation}
Here $\norm{.}_\infty$ is the usual essential supremum norm and $L^\infty$ will denote the usual space of essentially
bounded functions. Finally,
if $\ff_\alpha(f)\in L^1_\alpha(\R^+)$, the inverse Fourier-Bessel transform,  is defined for almost every $x$ by
$$
f (x) =\int_0^\infty \ff_\alpha(f) (\xi)j_\alpha(2\pi x\xi)\,\mbox{d}\mu_\alpha (\xi).
$$

For $\lambda>0$, we introduce the dilation operator $\delta_\lambda$, defined by
$$
\delta_\lambda f(x)=\frac{1}{\lambda^{\alpha+1}}f\left(\frac{x}{\lambda}\right).
$$
Notice that $\ff_\alpha \delta_\lambda= \delta_{\lambda^{-1}}\ff_\alpha $.

\medskip

Let us now gather some facts about Bessel functions that will be used throughout the paper.
First, a more refined estimate that we will need is the following: when $t\to \infty$,
\begin{equation}\label{Ajalpha}
j_\alpha(t)=\frac{2^{\alpha+1/2}\Gamma(\alpha+1)}{\sqrt{\pi}}
t^{-\alpha-1/2}\cos\left(t-(2\alpha+1)\frac{\pi}{4}\right)+O(t^{-\alpha-3/2}).
\end{equation}
In particular, there is a constant $c_\alpha$ such that
\begin{equation}\label{majbessel}
|j_\alpha(t)|\le c_\alpha(1+t)^{-\alpha-1/2}.
\end{equation}

Further, we will make use of a few formulas involving the functions $j_\alpha(x)$
(see {\it  e.g.} \cite[page 132-134]{watson}):
\begin{equation}\label{eqderive}
\frac{\d}{\d x} j_\alpha(x)= j'_\alpha(x)= -\frac{x}{2(\alpha+1)}j_{\alpha+1}(x),
\end{equation}
\begin{equation}\label{eqr1}
  \int_0^{s} j_\alpha(tx) t^{2\alpha+1}\d t= \frac{s^{2\alpha+2}}{2\alpha+2}j_{\alpha+1}(sx),\ \ s>0,
\end{equation}
and
\begin{equation}\label{eqr2}
 \int_0^{s} j_\alpha(t)^2\, t^{2\alpha+1}\d t = \frac{s^{2\alpha+2}}{2}  \Big(j'_\alpha(s)^2 +\frac{2\alpha}{s}j'_\alpha(s)j_\alpha(s)+j_\alpha(s)^2 \Big),
\end{equation}
while, for $u\neq v$ we have
\begin{equation}\label{eqr3}
\int_0^{s} j_\alpha(ut) j_\alpha(vt) t^{2\alpha+1}\d t
= \frac{s^{2\alpha+1}}{u^2-v^2} \Big(vj'_\alpha(vs)j_\alpha(us) -uj'_\alpha(us)j_\alpha(vs)\Big).
\end{equation}

\subsection{Generalized translation}
Following Levitan \cite{Bm}, for any function $f \in C^2(\R^+)$ we define the generalized Bessel translation operator
$$
T^\alpha_yf(x)=u(x,y), \hspace{0.5cm}x,\;y \in \R^+,
$$
as a solution of the following Cauchy problem:
$$
\left(\frac{\partial^2}{\partial x^2}
+ \frac{2\alpha+1}{x} \frac{\partial}{\partial x}\right)u(x,y)
=\left(\frac{\partial^2}{\partial y^2}+ \frac{2\alpha+1}{y} \frac{\partial}{\partial y}\right)u(x,y),
$$
with initial conditions $ u(x,0)=f(x)$ and $\frac{\partial}{\partial x}u(x,0)=0$, here
$\frac{\partial^2}{\partial x^2}+ \frac{2\alpha+1}{x} \frac{\partial}{\partial x}$ is
the differential Bessel operator. The solution of the Cauchy problem can be written out in explicit form:
\begin{equation}\label{translate}
T_x^\alpha f (y)=\frac{\Gamma(\alpha+1)}{\sqrt{\pi}\Gamma(\alpha+1/2)}\int_0^\pi f(\sqrt{x^2+y^2-2xy\cos \theta})(\sin \theta)^{2\alpha } \d \theta.
\end{equation}

The operator $T_x^\alpha$ can be also written by the formula
$$
T_x^\alpha f (y)=\int_0^\infty f(t) W(x,y,t) \d\mu_\alpha(t),
$$
where $W(x,y,t) \d\mu_\alpha(t)$ is a probability measure and  $W(x,y,t)$ is defined by
$$
W(x,y,t)= \begin{cases}\dst
\frac{2^{2\alpha-2}\Gamma(\alpha+1)^2 }{ \pi^{\alpha+3/2}\Gamma\left(\alpha+\frac{1}{2}\right) }
\frac{\Delta(x,y,t)^{2\alpha-1} }{(xyt)^{2\alpha}},&\mbox{if } \abs{x-y}<t< x+y;\\
0,&\mbox{otherwise};
\end{cases}
$$
where
$$
\Delta(x,y,t)=\bigl((x+y)^2 -t^2\bigr)^{1/2}\bigl(t^2-  (x-y)^2\bigr)^{1/2}
$$
is the area of the triangle with side length $x,y,t$.
Further, $W(x,y,t)\,\mbox{d}\mu_\alpha(t)$ is a probability measure, so that, for $p\ge1$,
$|T^\alpha_xf|^p\leq T_x^\alpha|f|^p$ thus
$$
\norm{T^\alpha_x f}_{L^p_\alpha}\leq\norm{f}_{L^p_\alpha}.
$$
This allows to extend the definition of $T^\alpha_xf$ to functions $f\in L^p_\alpha(\R^+)$.

It is also well known that for $\lambda>0$,
 $$
 T_x^\alpha j_\alpha(\lambda\;.)(y)=j_\alpha(\lambda x)j_\alpha(\lambda y).
 $$
Therefore for $f\in L^p_\alpha (\R^+)$, $p=1$ or $2$,
$$
\ff_\alpha\bigl(T^\alpha_xf)(y)=j_\alpha(2\pi xy)\ff_\alpha(f)(y).
$$
Note also that if $f$ is supported in $[0,b]$, then $T_xf$ is supported in $[0,b+x]$.

\medskip

The Bessel convolution $f\ast_\alpha g$ of two  functions  $f$ and $g$ in $L^1_\alpha(\R^+)\cap L^\infty $  is defined by
$$
f\ast_\alpha g (x)= \int_0^\infty f(t)T_x^\alpha g (t)\d\mu_\alpha(t)=\int_0^\infty T_x^\alpha f (t) g(t)\d\mu_\alpha(t),\ \ x\ge0.
$$

Then, if $1\le p,q,r \le\infty$ are such that $1/p+1/q-1=1/r$,
$f\ast_\alpha g\in L^r_\alpha (\R^+)$ and
 $$
 \norm{f\ast_\alpha g}_{L^r_\alpha}\leq\norm{f}_{L^p_\alpha}\norm{g}_{L^q_\alpha}.
 $$
This then allows to define $ f\ast_\alpha g$ for $f\in L^p_\alpha (\R^+) $ and $g\in L^q_\alpha (\R^+)$.
Moreover for $f\in L^1_\alpha (\R^+)$ and $g\in L^q_\alpha (\R^+)$, $q=1$ or $2$ we have
$$
\ff_\alpha(f\ast_\alpha g) = \ff_\alpha(f) \ff_\alpha(g).
$$

\subsection{Bernstein's Inequality}
Let us introduce the following notation.

\begin{notation}
Let $f$ be an entire and even function, $ f(z)= \dst\sum_{n=0}^\infty a_n z^{2n}$.
We define two operations on $f$:
$$
Df= \frac{1}{2z}\frac{\mathrm{d}f}{\mathrm{d}z}
\quad\mbox{and}\quad
\pp f(z)= \sum_{n=0}^\infty a_n z^{n}.
$$
In other words $f(z)=\pp f(z^2)$ and $Df=\dst\sum_{n=0}^\infty(n+1)a_{n+1}z^{2n}$
which is again entire and even.
\end{notation}

It is clear that $\pp D f= \partial \pp f$ and, for every $k$, $D^kf$ exists and
$\pp D^k f= \partial^k \pp f$.

We will need a variant of Bernstein's Inequality for $\ff_\alpha$ for which we have been
unable to find a proper reference.

\begin{proposition}[Bernstein's Inequality]\ \\
Let $f$ be a function in $L^1_\alpha(\R^+)$ such that $\supp \ff_\alpha(f) \subset [0, b]$. Then
$f$ is an even entire function such that
\begin{equation}\label{bernstein}
\norm{D^k f}_{L_{\alpha+k}^2}\leq
\sqrt{\frac{\Gamma(\alpha+1)}{\Gamma(\alpha+k+1)}}\big(\sqrt{\pi^3}\,b\big)^k\norm{f}_{L_\alpha^2}.
\end{equation}
\end{proposition}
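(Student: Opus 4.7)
The plan is to express $D^k f$ as a multiple of the Fourier-Bessel transform of order $\alpha+k$ applied to $\mathcal{F}_\alpha(f)\mathbf{1}_{[0,b]}$, and then invoke Plancherel for $\mathcal{F}_{\alpha+k}$ together with the crude bound $\xi\le b$ on the support. The starting point is that $\mathcal{F}_\alpha(f)$ is bounded by \eqref{eq:L1infty} and compactly supported, hence lies in $L^1_\alpha\cap L^2_\alpha$, so Fourier-Bessel inversion gives
\begin{equation*}
f(x)=\int_0^b \mathcal{F}_\alpha(f)(\xi)\,j_\alpha(2\pi x\xi)\,d\mu_\alpha(\xi).
\end{equation*}
Since $j_\alpha$ is even and entire and the integration domain is compact, this representation automatically displays $f$ as an even entire function and justifies differentiation under the integral sign.

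My next step would be to compute $D_x^k[j_\alpha(2\pi\xi x)]$. A straightforward induction from \eqref{eqderive} yields the kernel identity
\begin{equation*}
D^k j_\alpha(z)=\frac{(-1)^k}{4^k}\,\frac{\Gamma(\alpha+1)}{\Gamma(\alpha+k+1)}\,j_{\alpha+k}(z),
\end{equation*}
while the relation $\mathcal{P}D=\partial\mathcal{P}$ immediately gives the scaling law $D_x^k[h(\lambda x)]=\lambda^{2k}(D^k h)(\lambda x)$. Applying the latter with $h=j_\alpha$ and $\lambda=2\pi\xi$ and substituting the kernel identity gives
\begin{equation*}
D_x^k\bigl[j_\alpha(2\pi\xi x)\bigr]=(-1)^k\pi^{2k}\xi^{2k}\,\frac{\Gamma(\alpha+1)}{\Gamma(\alpha+k+1)}\,j_{\alpha+k}(2\pi\xi x).
\end{equation*}
Plugging this back into the representation of $f$ and using the elementary measure identity
\begin{equation*}
d\mu_{\alpha+k}(\xi)=\pi^k\,\frac{\Gamma(\alpha+1)}{\Gamma(\alpha+k+1)}\,\xi^{2k}\,d\mu_\alpha(\xi),
\end{equation*}
all the $\Gamma$-factors collapse and one reaches the key identity
\begin{equation*}
D^k f(x)=(-\pi)^k\,\mathcal{F}_{\alpha+k}\bigl(\mathcal{F}_\alpha(f)\mathbf{1}_{[0,b]}\bigr)(x).
\end{equation*}

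To finish, I would invoke Plancherel for $\mathcal{F}_{\alpha+k}$ to obtain
\begin{equation*}
\norm{D^k f}_{L^2_{\alpha+k}}^2=\pi^{2k}\int_0^b|\mathcal{F}_\alpha(f)(\xi)|^2\,d\mu_{\alpha+k}(\xi),
\end{equation*}
and then apply the measure identity once more together with the pointwise bound $\xi\le b$ and Plancherel for $\mathcal{F}_\alpha$ to estimate the right-hand side by $\pi^{3k}b^{2k}\frac{\Gamma(\alpha+1)}{\Gamma(\alpha+k+1)}\norm{f}_{L^2_\alpha}^2$; taking square roots gives \eqref{bernstein}. The only mildly delicate point is the bookkeeping of the $\pi$-, $\Gamma$- and $4^k$-factors produced by $d\mu_\alpha$, $j_\alpha$ and the recurrence \eqref{eqderive}; everything else is routine.
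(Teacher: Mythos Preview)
Your proof is correct and follows essentially the same route as the paper's: both obtain the key identity $D^k f=(-\pi)^k\ff_{\alpha+k}\bigl[\ff_\alpha(f)\bigr]$ from the inversion formula and the derivative relation \eqref{eqderive}, then conclude via Plancherel for $\ff_{\alpha+k}$, the measure comparison $\d\mu_{\alpha+k}(\xi)=\pi^k\frac{\Gamma(\alpha+1)}{\Gamma(\alpha+k+1)}\xi^{2k}\d\mu_\alpha(\xi)$ with $\xi\le b$, and Plancherel for $\ff_\alpha$. The only cosmetic difference is that you package the induction into the closed formula $D^k j_\alpha=\frac{(-1)^k}{4^k}\frac{\Gamma(\alpha+1)}{\Gamma(\alpha+k+1)}j_{\alpha+k}$ together with the scaling law for $D$, whereas the paper iterates the single step $Df=-\pi\ff_{\alpha+1}[\ff_\alpha(f)]$ directly.
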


\begin{proof}
As $\supp \ff_\alpha(f) \subset [0, b]$ then  $ \ff_\alpha(f) \in L^1_\beta(\R^+)\cap L^2_\beta(\R^+)$
for every $\beta\geq\alpha$.
By the inversion formula for the Fourier-Bessel transform, we have
$$
f(x)= \int_0^b \ff_\alpha(f)(y) j_\alpha(2\pi xy) \d\mu_\alpha(y).
$$
In particular, $f$ is an even entire function.
As $\dst j'_\alpha(t)=-\frac{t j_{\alpha+1}(t)}{2(\alpha+1)} $, we may differentiate the previous formula to obtain
$$
f'(x)= -2\pi x \int_0^b \ff_\alpha(f)(y) j_{\alpha+1}(2\pi xy) \frac{\pi y^2}{(\alpha+1)}\d\mu_\alpha(y).
$$
It follows that
$\dst
Df(x)= -\pi \int_0^b \ff_\alpha(f)(y) j_{\alpha+1}(2\pi xy) \d\mu_{\alpha+1}(y)
=-\pi\ff_{\alpha+1}[\ff_\alpha(f)](x)$.
Repeating the previous operation,
$$
D^kf(x)=(-\pi)^k\int_0^b \ff_\alpha(f)(y) j_{\alpha+k}(2\pi xy) \d\mu_{\alpha+k}(y)
=(-\pi)^k\ff_{\alpha+k}\left[\ff_\alpha(f)\right](x).
$$
But then
\begin{eqnarray*}
\norm{D^k f}_{L^2_{\alpha+k}}&=&
\pi^k \big\|\ff_{\alpha+k}[\ff_{\alpha}(f)]\big\|_{L^2_{\alpha+k}}\\
&=&\pi^k \norm{\ff_{\alpha}(f)}_{L^2_{\alpha+k}} \\
&=& \pi^k\left(\int_0^b|\ff_\alpha(f)(y)|^2 \frac{\Gamma(\alpha+1)}{\Gamma(\alpha+k+1)}(\pi y^2)^k
\d\mu_\alpha(y)\right)^{1/2}\\
&\le&\sqrt{\frac{\Gamma(\alpha+1)}{\Gamma(\alpha+k+1)}}\big(\sqrt{\pi^3}\,b\big)^k\left(\int_0^b|\ff_\alpha(f)(y)|^2
\d\mu_\alpha(y)\right)^{1/2}.
\end{eqnarray*}
Finally, from Plancherel's theorem we deduce,
\begin{eqnarray*}
\dst\norm{D^k f}_{L^2_{\alpha+k}}
&\leq&\sqrt{\frac{\Gamma(\alpha+1)}{\Gamma(\alpha+k+1)}}\big(\sqrt{\pi^3}\,b\big)^k\norm{\ff_\alpha(f)}_{L^2_\alpha}\\
&=&\sqrt{\frac{\Gamma(\alpha+1)}{\Gamma(\alpha+k+1)}}\big(\sqrt{\pi^3}\,b\big)^k\norm{f}_{L^2_\alpha}
\end{eqnarray*}
as expected.
\end{proof}

\section{A results on $(\eps,\alpha)$-thin sets and sets of finite measure}

This section is motivated by our recent results on quantitative uncertainty principles stated in \cite{GJ}.
We consider a pair of orthogonal projections on $L^2_\alpha(\R^+)$ defined by
\begin{equation*}
E_S f= \chi_S f,\hspace{1cm} F_\Sigma f= \ff_\alpha \Big[E_\Sigma \ff_\alpha (f)\Big],
\end{equation*}
where  $S$ and $\Sigma$ are measurable subsets of $\R^+$.

The following lemma is well known (see \eg \cite[Lemma 4.1]{GJ}):

\begin{lemma}\ \\
\label{equi}
Let $S$ and $\Sigma$ be a measurable subsets of $\R^+$. If $\norm{F_\Sigma E_S}<1$, then
$(S,\Sigma)$ is a strong  annihilating pair with an annihilation constant $\Big(1-\norm{F_\Sigma E_S }\Big)^{-2}$.
\end{lemma}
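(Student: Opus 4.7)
My plan is to combine the decomposition $f=E_Sf+E_{S^c}f$ with the orthogonality of the projection $F_\Sigma$ on $L^2_\alpha(\R^+)$ (which comes from Plancherel's theorem for $\ff_\alpha$, the map $F_\Sigma$ being conjugate via $\ff_\alpha$ to multiplication by $\chi_\Sigma$). Set $k:=\norm{F_\Sigma E_S}<1$, $a:=\norm{f}_{L^2_\alpha(S^c)}$, $b:=\norm{\ff_\alpha(f)}_{L^2_\alpha(\Sigma^c)}$ and $x:=\norm{f}_{L^2_\alpha}$.

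Applying $F_\Sigma$ to the decomposition and invoking the triangle inequality together with the hypothesis $\norm{F_\Sigma E_S}=k$ and the trivial bound $\norm{F_\Sigma}\le1$, I would first get the linear control
$$
\norm{F_\Sigma f}_{L^2_\alpha}\le k\,\norm{f}_{L^2_\alpha}+\norm{E_{S^c}f}_{L^2_\alpha}=kx+a.
$$
On the other hand, since $F_\Sigma$ is an orthogonal projection, Pythagoras together with Plancherel's formula yields
$$
x^2=\norm{F_\Sigma f}^2_{L^2_\alpha}+\norm{(I-F_\Sigma)f}^2_{L^2_\alpha}=\norm{F_\Sigma f}^2_{L^2_\alpha}+b^2.
$$

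Substituting the first estimate into the second produces the quadratic inequality $x^2\le(kx+a)^2+b^2$, whose largest admissible root is
$$
x\le\frac{ka+\sqrt{a^2+(1-k^2)b^2}}{1-k^2}.
$$
The last step is to check that this bound is dominated by $\sqrt{a^2+b^2}/(1-k)$: after clearing denominators and squaring, the required inequality reduces to the trivial $(1+k)(a^2+b^2)\ge0$. Squaring then yields the target $\norm{f}^2_{L^2_\alpha}\le(1-k)^{-2}(a^2+b^2)$, which is exactly the strong-annihilating-pair estimate with the claimed constant $(1-\norm{F_\Sigma E_S})^{-2}$. The only part that is not essentially automatic is this closing algebraic tightening; all the analytic content has already been absorbed into the operator-norm hypothesis and into Plancherel's identity for $\ff_\alpha$, so no further special property of the Fourier-Bessel transform enters the argument.
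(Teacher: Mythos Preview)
Your argument is correct. The paper does not actually supply a proof of this lemma; it merely cites \cite[Lemma~4.1]{GJ} as well known, so there is nothing to compare against line by line.

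For what it is worth, the standard route is slightly shorter than the quadratic-inequality detour you take at the end. Writing $k=\norm{F_\Sigma E_S}$, the triangle inequality gives
\[
(1-k)\norm{f}_{L^2_\alpha}\le\norm{f-F_\Sigma E_Sf}_{L^2_\alpha},
\]
and then one uses the orthogonal decomposition
\[
f-F_\Sigma E_Sf=(I-F_\Sigma)f+F_\Sigma E_{S^c}f,
\]
where the two summands are orthogonal because $F_\Sigma$ is an orthogonal projection. Pythagoras and $\norm{F_\Sigma}\le 1$ then yield $(1-k)^2\norm{f}_{L^2_\alpha}^2\le a^2+b^2$ directly. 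Your approach reaches the same constant; the only minor quibble is that the closing algebraic step does not literally ``reduce to $(1+k)(a^2+b^2)\ge0$''---a cleaner justification is simply $\sqrt{a^2+(1-k^2)b^2}\le\sqrt{a^2+b^2}$ together with $ka\le k\sqrt{a^2+b^2}$, which immediately gives
\[
ka+\sqrt{a^2+(1-k^2)b^2}\le(1+k)\sqrt{a^2+b^2}.
\]
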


Conversely it was shown in \cite[I.1.1.A, page 88]{HJ} that if the pair $(S,\Sigma)$ is strongly annihilating then
$\|F_\Sigma E_S\|<1$. We will not use this fact here.

From \cite{GJ} we recall the following definition:
\begin{definition}\ \\
Let $\eps\in(0,1)$ and $\alpha>-1/2$.
A set $S \subset \R^+$ is $(\eps,\alpha)$-thin if, for $0\leq x\leq 1$,
$$
\mu_\alpha \bigl( S\cap [x,x+1]\bigr)\le
\eps \mu_\alpha \bigl([x,x+1]\bigr)
$$
and for $x\geq 1$,
$$
\mu_\alpha \left( S\cap \ent{x,x+\frac{1}{x}}\right)\le
\eps\mu_\alpha \left( \ent{x,x+\frac{1}{x}}\right).
$$
\end{definition}


We have shown in \cite{GJ} that any pair of sets of finite measure as well as any pair of $(\eps,\alpha)$-thin subsets
(with $\eps$ sufficiently small) are strongly annihilating. Precisely we have the following theorem:

\begin{theorem}[\cite{GJ}, Theorem A and Theorem B]\ \label{th:strong} \\
Let $\alpha>-1/2$.

\noindent-- Let $S_0$, $\Sigma_0$ be a pair of measurable subsets of $\R^+$ with $0<\mu_\alpha(S_0),\;\mu_\alpha(\Sigma_0)<\infty$. Then
$$
\norm{F_{\Sigma_0} E_{S_0}}<1.
$$
-- There exists $\eps_0$ such that, for every $0<\eps<\eps_0$, there exists a positive constant
$C$ such that if $S_1$ and $\Sigma_1$ are $(\eps,\alpha)$-thin subsets in $\R^+$ then
 $$
\norm{F_{\Sigma_1} E_{S_1}}\le C \eps^{1/2}.
$$
\end{theorem}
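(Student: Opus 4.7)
The theorem splits into two statements that require rather different approaches.

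For the finite-measure part, the plan is to show that $F_{\Sigma_0}E_{S_0}$ is a Hilbert--Schmidt operator with operator norm strictly less than $1$. Using the inversion formula for $\ff_\alpha$ and Fubini, the operator has integral kernel
\[
K(x,y)=\chi_{S_0}(y)\int_{\Sigma_0} j_\alpha(2\pi x\xi)\,j_\alpha(2\pi y\xi)\d\mu_\alpha(\xi).
\]
Because $|j_\alpha|\leq 1$, the square of the Hilbert--Schmidt norm is bounded by $\mu_\alpha(S_0)\mu_\alpha(\Sigma_0)<\infty$, so the operator is compact. Since $E_{S_0}$ and $F_{\Sigma_0}$ are orthogonal projections one automatically has $\norm{F_{\Sigma_0}E_{S_0}}\leq 1$, and if equality held, compactness would provide a nonzero extremizer $f$ with $\supp f\subset S_0$ and $\supp \ff_\alpha(f)\subset\Sigma_0$. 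Such a pair would then be excluded by a Benedicks-type uniqueness result for $\ff_\alpha$: at the half-integer values $\alpha=d/2-1$ one lifts $f$ to a radial function on $\R^d$ and applies the classical Benedicks theorem; for arbitrary $\alpha>-1/2$ one should argue by approximation using that a function with $\ff_\alpha$-transform supported in a set of finite measure extends to an entire even function via a Paley--Wiener-type representation and hence cannot vanish on a set of positive measure.

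For the thin-set part, the self-adjointness of $E_{S_1}$ and $F_{\Sigma_1}$ reduces the claim, by a duality argument, to the estimate
\[
\norm{f}_{L^2_\alpha(S_1)}\leq C\eps^{1/2}\norm{f}_{L^2_\alpha}\qquad\text{for all }f\text{ with }\supp\ff_\alpha(f)\subset\Sigma_1.
\]
I would follow the Shubin--Vakilian--Wolff blueprint. Cover $\R^+$ by intervals $\{I_k\}$ whose lengths match the two scales in the definition of $(\eps,\alpha)$-thinness: length $1$ on $[0,1]$ and length $1/x$ on $[1,\infty)$. On each $I_k$, apply a localized version of the Bernstein-type inequality proved above to dominate $\norm{f\chi_{S_1\cap I_k}}_{L^2_\alpha}^2$ by $\mu_\alpha(S_1\cap I_k)$ times a local $L^2_\alpha$-norm of $f$ built from its iterated $D$-derivatives; the $(\eps,\alpha)$-thinness of $S_1$ then contributes a factor $\eps$ on every piece. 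Summing via an almost-orthogonality argument, in which the $(\eps,\alpha)$-thinness of $\Sigma_1$ on the frequency side is used through Plancherel to prevent the overlap of local pieces from blowing up, yields the global bound with the $\eps^{1/2}$ power.

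The main obstacles are clear. In Part 1 one must establish the Benedicks-type uniqueness theorem for arbitrary $\alpha>-1/2$, since the radial-lift trick handles only half-integer values; a Nazarov-style density argument transferred through Bessel convolutions or a careful periodization of the Hankel kernel should furnish the missing case. In Part 2, the Bessel translation $T_x^\alpha$ is not a pure shift, so each localization and each almost-orthogonality estimate has to be carried out by hand in the presence of the weight $x^{2\alpha+1}$ and the two distinct covering scales on $[0,1]$ and $[1,\infty)$; combining the spatial gain $\eps$ from $S_1$ with the dual frequency-side gain from $\Sigma_1$ so as to produce exactly $\eps^{1/2}$ (and not a smaller power) is the heart of the second argument.
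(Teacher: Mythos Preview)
The paper does not prove this theorem: it is quoted verbatim from \cite{GJ} (Theorems~A and~B there) and used as a black box in the proof of Corollary~\ref{th:SVWBAB}. There is therefore no ``paper's own proof'' to compare with.

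That said, your outline for Part~1 is the classical Amrein--Berthier scheme and is indeed how \cite{GJ} proceeds: the Hilbert--Schmidt estimate for $F_{\Sigma_0}E_{S_0}$ with kernel $K(x,y)=\chi_{S_0}(y)\ff_\alpha\bigl(\chi_{\Sigma_0}j_\alpha(2\pi x\,\cdot)\bigr)(y)$ gives compactness, and a putative extremizer would have $\supp f\subset S_0$, $\supp\ff_\alpha(f)\subset\Sigma_0$. Your justification of the weak annihilation step, however, contains a genuine gap. The sentence ``a function with $\ff_\alpha$-transform supported in a set of finite measure extends to an entire even function via a Paley--Wiener-type representation'' is false: Paley--Wiener gives entireness only when the support is \emph{bounded}, not merely of finite $\mu_\alpha$-measure. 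A set of finite measure can be unbounded, and then neither $f$ nor $\ff_\alpha(f)$ need be analytic anywhere. The Benedicks-type result for $\ff_\alpha$ in \cite{GJ} is obtained instead by a genuine adaptation of Benedicks' periodization/translation argument (using the Bessel translation $T_x^\alpha$ and a density lemma), not by analyticity; your ``approximation'' remark does not substitute for this.

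For Part~2 your blueprint is in the right spirit---localize to the two families of intervals dictated by the definition of $(\eps,\alpha)$-thinness and use a Schur/almost-orthogonality argument---but what you wrote is a plan, not a proof: the decisive step in \cite{GJ} (and already visible in Lemma~\ref{ss} of the present paper) is the pointwise bound $\sup_y\int_S|T_x^\alpha\phi(y)|\d\mu_\alpha(x)\le c\eps$ for a suitable Schwartz $\phi$, which feeds Schur's test directly and produces the $\eps^{1/2}$ factor without any Bernstein-type input. Your proposed route through ``iterated $D$-derivatives'' and local Bernstein inequalities is not how the argument runs and would not obviously close.
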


\begin{remarknum}\ \\ \label{r1}
-- For the Fourier transform $\ff$ on  $L^2(\R^d)$, the first part of Theorem \ref{th:strong} was  proved  by Amrein-Berthier \cite{AB}
 and the second part  was proved by Shubin-Vakilian-Wolff \cite{SVW}.\\
-- If $\alpha=-1/2$, then $\mu_{-1/2}$ is the Lebesgue measure and $\ff_{-1/2}$ is the  Fourier-cosine transform defined for any even function
 $f\in L^2(\R^+)$ by
  $$
  \ff_{-1/2}(f)(\xi)=\int_0^\infty f(x)\cos(2\pi x\xi)\d x.
  $$
In other words, $\ff_{-1/2}$ is the Fourier transform $\ff$ restricted to even function
in the sense that, if $\ffi\in L^2(\R)$ is even and $f=\ffi|_{\R^+}$ ---the restriction of $\ffi$ to $\R^+$--- then $\ff[\ffi](\xi)=\ff_{-1/2}[f](\xi)$ for $\xi\geq 0$.
It follows that Theorem \ref{th:strong} is also valid for $\alpha=-1/2$.
\end{remarknum}

In the definition of $\eps$-thin sets, different conditions are asked on the part of the set
included in $[0,2]$ and the remaining part. This separation is somewhat arbitrary and one expects
that the first condition could be imposed in any neighborhood of $0$ and the
second one at infinity. A careful and painful adaptation of the proof
in \cite{GJ} surely gives such a result. However, we now take a simpler route by first showing
that an  $\eps$-thin set and a compact set form a strong annihilating pair and that an estimate of
the annihilation constant is available:

\begin{lemma}\label{ss}\ \\
Let $S$ a $(\eps,\alpha)$-thin subset of $\R^+$ and let $\Sigma =[0,b]$. Then
$$
\|F_\Sigma E_S\|\le C \eps^{1/2}.
$$
\end{lemma}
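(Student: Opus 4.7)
My plan is to proceed by adjointness: since $\|F_\Sigma E_S\|=\|E_S F_\Sigma\|$, it suffices to show that, for every $f\in L^2_\alpha(\R^+)$ with $\supp\ff_\alpha(f)\subset[0,b]$,
$$\|\chi_S f\|_{L^2_\alpha}^2 \le C^2\eps \|f\|_{L^2_\alpha}^2.$$
Cauchy--Schwarz under the inversion integral for $f$, combined with $|j_\alpha|\le 1$, yields the uniform bound $\|f\|_\infty\le\mu_\alpha([0,b])^{1/2}\|f\|_{L^2_\alpha}$, which will dispose of any portion of $S$ whose $\mu_\alpha$-measure is already $O(\eps)$.

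Next I split $S=S'\cup S''$ with $S'=S\cap[0,b+2]$ and $S''=S\cap(b+2,\infty)$. Iterating the two parts of the $(\eps,\alpha)$-thin condition along the natural covering of $[0,b+2]$ by unit intervals on $[0,2]$ and intervals $[x,x+1/x]$ on $[2,b+2]$ yields $\mu_\alpha(S')\le C(\alpha,b)\eps$, so the uniform bound above gives $\|\chi_{S'}f\|_{L^2_\alpha}^2\le C(\alpha,b)\eps\|f\|_{L^2_\alpha}^2$. For the tail $S''$, $\mu_\alpha(S'')$ may be infinite, so a global sup bound is inadequate. Instead I partition $(b+2,\infty)$ into $J_n=[x_n,x_{n+1}]$ with $x_0=b+2$ and $x_{n+1}=x_n+1/x_n$; the second thin condition at $x_n$ gives $\mu_\alpha(S''\cap J_n)\le\eps\mu_\alpha(J_n)$. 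The crux is then a local sup-to-mean comparison
\begin{equation}\label{eq:local}
\sup_{J_n}|f|^2 \le \frac{C(\alpha,b)}{\mu_\alpha(J_n)}\int_{J_n^*}|f|^2\,d\mu_\alpha
\end{equation}
on a mild enlargement $J_n^*$ of $J_n$ with bounded overlap in $n$; combined with the thinness and summation over $n$, this will produce $\|\chi_{S''}f\|_{L^2_\alpha}^2\le C(\alpha,b)\eps\|f\|_{L^2_\alpha}^2$ as required.

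The main obstacle is establishing \eqref{eq:local}. The key observation is that $|J_n|=1/x_n<1/b$, so $J_n$ sits well below the natural oscillation scale of a function band-limited in $[0,b]$; Bernstein's inequality \eqref{bernstein}, in the form $\|D^k f\|_{L^2_{\alpha+k}}\le C_k b^k\|f\|_{L^2_\alpha}$, is the quantitative vehicle. I would implement it through the reproducing kernel $K(x,y)=\int_0^b j_\alpha(2\pi xz)j_\alpha(2\pi yz)\,d\mu_\alpha(z)$, for which $f(x)=\int K(x,y)f(y)\,d\mu_\alpha(y)$: the local contribution $\int_{J_n^*}K(x,y)f(y)\,d\mu_\alpha(y)$ is controlled by Cauchy--Schwarz, using $K(x,x)\sim x^{-2\alpha-1}$ (from \eqref{Ajalpha}) matched against $\mu_\alpha(J_n)\sim x_n^{2\alpha}$ to produce exactly the $1/\mu_\alpha(J_n)$ factor in \eqref{eq:local}, while the off-diagonal contribution is handled via the explicit kernel formula \eqref{eqr3}, which exhibits the required decay of $K(x,y)$ for $|x-y|\gtrsim 1/x_n$. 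As a fallback I would use the asymptotic $j_\alpha(t)\sim c_\alpha t^{-\alpha-1/2}\cos(t-\theta_\alpha)$ of \eqref{Ajalpha} to rewrite $x^{\alpha+1/2}f(x)$ as a Fourier-cosine transform of a band-limited function on $[0,b]$ and invoke the classical Plancherel--P\'olya inequality on $\R$.
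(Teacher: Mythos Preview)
Your approach differs from the paper's. The paper writes $f=\phi\ast_\alpha f$ for a Schwartz function $\phi$ with $\ff_\alpha\phi=1$ on $[0,b]$, so that $E_SF_\Sigma$ is an integral operator with kernel $\chi_S(x)\,T_x^\alpha\phi(y)$, and applies Schur's test: the row bound is trivially $\|\phi\|_{L^1_\alpha}$, while the column bound $\sup_y\int_S|T_x^\alpha\phi(y)|\,d\mu_\alpha(x)\le c\eps$ is quoted directly from \cite[Ineq.~(5.29)]{GJ}. This gives $\|F_\Sigma E_S\|\le\sqrt{c\|\phi\|_{L^1_\alpha}}\,\eps^{1/2}$ in a few lines, with no splitting of $S$ and no local maximal inequality.

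Your treatment of $S'$ is fine, but there is a genuine gap in the justification of the local bound \eqref{eq:local} via the reproducing kernel. From \eqref{eqr3} and \eqref{majbessel} one gets only $|K(x,y)|\lesssim (xy)^{-\alpha-1/2}|x-y|^{-1}$, so that
\[
\int_{(J_n^*)^c}|K(x,y)|^2\,d\mu_\alpha(y)\ \sim\ x^{-2\alpha-1}\int_{|y-x|\gtrsim 1/x_n}\frac{dy}{|x-y|^2}\ \sim\ x_n^{-2\alpha},
\]
hence the off-diagonal contribution to $|f(x)|$ is of order $x_n^{-\alpha}\|f\|_{L^2_\alpha}$. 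This is \emph{not} dominated by the local term, and when you multiply by $\mu_\alpha(S\cap J_n)\le\eps\,\mu_\alpha(J_n)\sim\eps\,x_n^{2\alpha}$ and sum over $n$ you obtain $\eps\,\|f\|_{L^2_\alpha}^2\sum_n 1=\infty$. The $1/|x-y|$ decay of the sharp kernel $K$ is simply too slow for this scheme.

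Two repairs are available. First, you do not actually need \eqref{eq:local} pointwise; the summed inequality $\sum_n\mu_\alpha(J_n)\sup_{J_n}|f|^2\le C\|f\|_{L^2_\alpha}^2$ is enough, and this \emph{does} follow once you replace $K$ by the rapidly decaying kernel $T_x^\alpha\phi$ with Schwartz $\phi$: starting from $|f(x)|^2\le\|\phi\|_{L^1_\alpha}\int|T_x^\alpha\phi(y)|\,|f(y)|^2\,d\mu_\alpha(y)$ and interchanging sum and integral, one is reduced to bounding $\sum_n\mu_\alpha(J_n)\sup_{x\in J_n}|T_x^\alpha\phi(y)|$ uniformly in $y$, which the super-polynomial decay of $\phi$ provides. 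Notice, however, that this route is essentially the paper's Schur-test argument unpacked. Second, your fallback through \eqref{Ajalpha} is the honest path to \eqref{eq:local} itself, but the $O(t^{-\alpha-3/2})$ remainder must be tracked through the inversion integral before the classical Plancherel--P\'olya inequality can be invoked; this is doable but is not a one-line matter.
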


In the next section  we will obtain a stronger result by characterizing all sets $S$ for which $(S,[0,b])$
are strongly annihilating.

\begin{proof}
The proof is inspired from  \cite[Lemma 4.2]{SVW}.
Note that
$$
\|F_\Sigma E_S\|=\| E_SF_\Sigma\|= \sup_{f=F_\Sigma f}\frac{\| E_Sf\|_{L^2_\alpha}}{\|f\|_{L^2_\alpha}}.
$$
Now let $f\in L^2_\alpha(\R^+)$ such that $\supp \ff_\alpha (f) \subset \Sigma$ and fix a Schwartz function $\phi$ with $\ff_\alpha (\phi)=1$ on $\Sigma$.
Then $f=\phi \ast_\alpha f$.

Let $\mathcal{Q}$ to be the operator from $L^2_\alpha(\R^+)$ to $L^2_\alpha(S)$ defined by
$$
\mathcal{Q} f (x) = E_S(\phi \ast_\alpha g)(x)=\int_0^\infty \chi_S(x)T_x^\alpha \phi(y)g(y)\d \mu_\alpha(y).
$$
As
$$
\sup_{x\in S} \int_0^\infty |T_x^\alpha \phi(y)| \d \mu_\alpha(y)\le  \|\phi\|_{L^1_\alpha}
$$
and from Inequality $(5.29)$ in \cite{GJ}, we have
$$
 \sup_{y\in \R^+}\int_S|T_x^\alpha \phi(y)|\d \mu_\alpha(x)\le c\eps.
$$
Then by Schur's test,
$$
\|\mathcal{Q} \|\le \sqrt{c\|\phi\|_{L^1_\alpha}}  \eps^{1/2}.
$$
Therefore, as $\|F_\Sigma\|=1$,
$$
\| E_S F_\Sigma f\|_{L^2_\alpha} = \|\mathcal{Q} f\|_{L^2_\alpha} \le\|\mathcal{Q} \|\| F_\Sigma f\|_{L^2_\alpha}
\le \|\mathcal{Q} \|\| f\|_{L^2_\alpha}.
$$
Hence $ \|F_\Sigma E_S\|\le \sqrt{c\|\phi\|_{L^1_\alpha}}  \eps^{1/2}.$
\end{proof}


We are now in position to prove the following uncertainty principle estimate.

\begin{corollary}\label{th:SVWBAB}\ \\
 Let $\alpha >-1/2$, $a,b>0$. Then there exists $\eps_0>0$ such that, if $0<\eps<\eps_0$ and
 if $S,\Sigma \subset \R^+$ are subsets of the form
$$
S=S_0\cup S_\infty,\ \
\Sigma=\Sigma_0\cup\Sigma_\infty
$$
where
$S_0= [0,a]$, $\Sigma_0=[0,b]$ and
$S_\infty\subset[a,\infty)$, $\Sigma_\infty\subset[b,\infty)$
are $(\eps,\alpha)$-thin. Then
$$
\|F_\Sigma E_S\|<1.
$$
In particular, $(S,\Sigma)$ is a strong annihilating pair.
\end{corollary}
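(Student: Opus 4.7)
The natural strategy is to split both projections according to the decomposition of $S$ and $\Sigma$ into their compact and thin parts, and then control the resulting four pieces using the tools already assembled in Theorem \ref{th:strong} and Lemma \ref{ss}. Since $S_0\cap S_\infty$ and $\Sigma_0\cap\Sigma_\infty$ have measure zero (they share at most one point), we have the orthogonal decompositions $E_S = E_{S_0}+E_{S_\infty}$ and $F_\Sigma = F_{\Sigma_0}+F_{\Sigma_\infty}$. The triangle inequality gives
$$
\|F_\Sigma E_S\| \le \|F_{\Sigma_0}E_{S_0}\| + \|F_{\Sigma_0}E_{S_\infty}\| + \|F_{\Sigma_\infty}E_{S_0}\| + \|F_{\Sigma_\infty}E_{S_\infty}\|,
$$
and it then suffices to bound each summand.

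The first term is handled by the first part of Theorem \ref{th:strong}: since $S_0=[0,a]$ and $\Sigma_0=[0,b]$ both have finite $\mu_\alpha$-measure, we obtain $\|F_{\Sigma_0}E_{S_0}\|=:\eta<1$, and crucially $\eta$ depends only on $a,b,\alpha$, not on $\eps$. The second term is a direct application of Lemma \ref{ss}: $S_\infty$ is $(\eps,\alpha)$-thin and $\Sigma_0=[0,b]$, so $\|F_{\Sigma_0}E_{S_\infty}\|\le C_b\eps^{1/2}$. The fourth term is handled by the second part of Theorem \ref{th:strong}, since both $S_\infty$ and $\Sigma_\infty$ are $(\eps,\alpha)$-thin, giving $\|F_{\Sigma_\infty}E_{S_\infty}\|\le C\eps^{1/2}$.

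The third term, $\|F_{\Sigma_\infty}E_{S_0}\|$, does not match the hypothesis of Lemma \ref{ss} directly (the roles of compact interval and thin set are swapped). To deal with it, observe that since $\ff_\alpha$ is unitary with $\ff_\alpha^2=\mathrm{Id}$, we have $\ff_\alpha E_A\ff_\alpha=F_A$ and $\ff_\alpha F_B\ff_\alpha=E_B$ for any measurable $A,B$; consequently
$$
\|F_{\Sigma_\infty}E_{S_0}\| = \|\ff_\alpha F_{\Sigma_\infty}E_{S_0}\ff_\alpha\| = \|E_{\Sigma_\infty}F_{S_0}\| = \|F_{S_0}E_{\Sigma_\infty}\|,
$$
and the last expression fits Lemma \ref{ss} with the $(\eps,\alpha)$-thin set $\Sigma_\infty$ and the compact interval $S_0=[0,a]$, yielding $\|F_{\Sigma_\infty}E_{S_0}\|\le C_a\eps^{1/2}$.

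Combining the four estimates, $\|F_\Sigma E_S\|\le \eta+C(a,b,\alpha)\eps^{1/2}$. Since $\eta<1$ is independent of $\eps$, we may choose $\eps_0>0$ small enough that $\eta+C(a,b,\alpha)\eps_0^{1/2}<1$; then for all $\eps<\eps_0$ we have $\|F_\Sigma E_S\|<1$, and Lemma \ref{equi} converts this into the strong annihilation property with annihilation constant $(1-\eta-C(a,b,\alpha)\eps^{1/2})^{-2}$. The only non-routine point is the duality identity used for the cross term, and even that is a standard consequence of the Plancherel property of $\ff_\alpha$; the real work has already been done in Theorem \ref{th:strong} and Lemma \ref{ss}.
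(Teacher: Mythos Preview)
Your proof is correct and follows the same decomposition and the same references (Theorem \ref{th:strong} and Lemma \ref{ss}) as the paper. The only difference is that the paper applies Lemma \ref{ss} to both cross terms without comment, whereas you make explicit the duality step $\|F_{\Sigma_\infty}E_{S_0}\|=\|F_{S_0}E_{\Sigma_\infty}\|$ needed to put the third term into the form covered by that lemma; this is a useful clarification but not a genuinely different route.
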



\begin{proof}
We have
$$
F_\Sigma
E_S=F_{\Sigma_0}E_{S_0}+F_{\Sigma_\infty}E_{S_0}+F_{\Sigma_0}E_{S_\infty}+
F_{\Sigma_\infty}E_{S_\infty}.
$$
 Now, according to Theorem \ref{th:strong},
$\norm{F_{\Sigma_0}E_{S_0}}<1$.
Further, by Lemma \ref{ss},
$$
\norm{F_{\Sigma_\infty}E_{S_0}}+\norm{F_{\Sigma_0}E_{S_\infty}}\leq
c_1\eps^{1/2}+c_2\eps^{1/2}\to0,\ \ \mathrm{ as }\;\eps\to 0
$$
and $\norm{F_{\Sigma_\infty}E_{S_\infty}}\leq
C\eps^{1/2}$, according to
Theorem \ref{th:strong}. It follows that, if $\eps$ is small enough,
then $\norm{F_\Sigma E_S}<1$,
so that $(S,\Sigma)$ is still a strong annihilating pair.
\end{proof}

\begin{remarknum}\
\begin{enumerate}
  \item The previous corollary remains true if  $S_\infty$ and $\Sigma_\infty$
are the union of finitely many $(\eps,\alpha)$-thin subsets in $\R^+$.
  \item From Remark \ref{r1}, Corollary \ref{th:SVWBAB} is still valid for $\alpha=-1/2$. Moreover if $S,\Sigma \subset \R^d$ are subsets of the form
  $$
  S=B(0,a)\cup S_\infty;\ \ \Sigma=B(0,b)\cup \Sigma_\infty,
  $$
  with $S_\infty\subset \R^d\backslash B(0,a)$ and $\Sigma_\infty\subset \R^d\backslash B(0,b)$ are $\eps$-thin (see \cite[page 1]{SVW} for the definition of
  $\eps$-thin set). Then the same technique
  used here shows that there is an $\eps_0$ depending on $a$ and $b$ such that, if $\eps<\eps_0$, then the pair $(S,\Sigma)$ is  strongly annihilating  for the Fourier transform $\ff$.
\end{enumerate}
\end{remarknum}

\section{A Logvinenko-Sereda type theorem}

A direct adaptation of the definition of relatively dense sets in the $\R^d$ setting (when applied to radial sets)
leads us to the introduction of the following definition:

\begin{definition}\ \\
Let $\alpha \ge0 $. A measurable subset $\Omega\subset \R^+$
is called \emph{relatively dense} if there exist  $\gamma,\; a>0$ such that for all $ x\ge a$
\begin{equation}\label{dense}
 \mu_\alpha \Big( \Omega\cap [x-a,x+a] \Big) \geq  \gamma \mu_\alpha\Big([x-a,x+a]\Big).
\end{equation}
In this case $\Omega$ will be called a \emph{$(\gamma,a)$-relatively dense} subset.
\end{definition}

\begin{example}
\begin{enumerate}
  \item Let $S\subset\R^+$ be a subset with $\mu_\alpha(S)<\infty$. Then  there exists $a>0$ such that
  $$
   \mu_\alpha \Big([x-a,x+a] \Big) \geq 2 \mu_\alpha(S),
  $$
  for all $x\ge a$. Thus $\Omega=S^c$ is $\left(\frac{1}{2},a\right)$-relatively dense.
\item Let  $S$ a $(\eps,\alpha)$-thin subset in $\R^+$. A simple covering argument shows that
there is a constant $c$ depending only on $\alpha$ such that, for all $x\ge \frac{1}{2}$
$$
\mu_\alpha \left( S\cap \ent{x-\frac{1}{2},x+\frac{1}{2}}\right)\le c\eps\mu_\alpha \left(\ent{x-\frac{1}{2},x+\frac{1}{2}}\right).
$$
Thus
$$
\mu_\alpha \left( S^c\cap \ent{x-\frac{1}{2},x+\frac{1}{2}}\right)\ge (1-c\eps)\mu_\alpha \left(\ent{x-\frac{1}{2},x+\frac{1}{2}}\right).
$$
Hence $\Omega=S^c$ is $\left((1-c\eps),\frac{1}{2}\right)$-relatively dense, provided $\eps$ is small enough.
\end{enumerate}
\end{example}

\medskip
For these two examples, we already know that $(S,[0,b])$ is a strong annihilating pair. We will show that
for every $\Omega$ relatively dense, and every $b>0$, $(\Omega^c,[0,b])$ is a strong annihilating pair.
But let us first prove that if $(\Omega^c,[0,b])$ is a strong annihilating pair, then $\Omega$ is relatively dense.

\begin{lemma}\label{lemcn}\ \\
Let $\Omega\subset \R^+$ be a measurable subset.
Suppose there exists a constant $c$ such that, for every $f\in L^2_\alpha(\R^+)$ with $\supp \ff_\alpha(f)\subset[0,b]$,
\begin{equation}\label{st}
   c \int_0^\infty |f(x)|^2 \d \mu_\alpha(x) \le \int_\Omega |f(x)|^2 \d \mu_\alpha(x),
\end{equation}
then $\Omega$ is relatively dense.
\end{lemma}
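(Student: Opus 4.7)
I would argue by contrapositive: assuming $\Omega$ is not relatively dense, I will exhibit a sequence $(f_n)\subset PW_\alpha(b)$ for which $\|f_n\|^2_{L^2_\alpha(\Omega)}/\|f_n\|^2_{L^2_\alpha}\to 0$, contradicting \eqref{st}. Fix $a>0$ (to be chosen large later). The negation of \eqref{dense} applied with parameters $\gamma_n=1/n$ and $a$ yields, for every $n\ge 1$, a point $x_n\ge a$ such that, writing $I_n:=[x_n-a,x_n+a]$,
\[
\mu_\alpha(\Omega\cap I_n)<\frac{1}{n}\,\mu_\alpha(I_n).
\]
Extracting a subsequence, either $x_n\to\infty$ or $x_n\to x^\ast\in[a,\infty)$. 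In the bounded case, a limiting (Fatou) argument gives $\mu_\alpha(\Omega\cap(x^\ast-a,x^\ast+a))=0$, and then a single test function $T^\alpha_{x^\ast}\phi$, with $\phi\in PW_\alpha(b)$ chosen so that most of its $L^2_\alpha$-mass lies in $[0,a]$, already contradicts \eqref{st}; so I focus on the case $x_n\to\infty$. Pick a nonzero $g\in C^\infty_c((0,b))$ and let $\phi\in L^2_\alpha(\R^+)$ satisfy $\ff_\alpha(\phi)=g$. The test functions
\[
f_n(y):=T^\alpha_{x_n}\phi(y)=\int_0^b j_\alpha(2\pi x_n\xi)\,j_\alpha(2\pi y\xi)\,g(\xi)\,\d\mu_\alpha(\xi)
\]
lie in $PW_\alpha(b)$ since $\ff_\alpha(f_n)(\xi)=j_\alpha(2\pi x_n\xi)g(\xi)$ is supported in $[0,b]$.

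The argument rests on three estimates, all derived from \eqref{Ajalpha} and the smoothness of $g$:
\begin{enumerate}
\item[(i)] $\|f_n\|^2_{L^2_\alpha}\asymp x_n^{-(2\alpha+1)}$: substituting the leading-order asymptotic of $|j_\alpha(2\pi x_n\xi)|^2$ in $\d\mu_\alpha(\xi)$ reduces the norm to $C\,x_n^{-(2\alpha+1)}\int_0^b|g|^2\cos^2\bigl(2\pi x_n\xi-(2\alpha+1)\pi/4\bigr)\d\xi$, whose limit by Riemann--Lebesgue is $\tfrac{C}{2}x_n^{-(2\alpha+1)}\|g\|^2_{L^2}$.
\item[(ii)] $\|f_n\|^2_{L^\infty(I_n)}\lesssim x_n^{-2(2\alpha+1)}$: for $y\in I_n$, the pointwise bound $|j_\alpha(2\pi x_n\xi)j_\alpha(2\pi y\xi)|\lesssim(x_ny)^{-\alpha-1/2}\xi^{-2\alpha-1}$ cancels the $\xi^{2\alpha+1}$ weight in $\d\mu_\alpha$, leaving an $L^1$-integral of $|g|$.
\item[(iii)] $\|f_n\|^2_{L^2_\alpha(I_n^c)}\le C_M\,a^{1-2M}\,\|f_n\|^2_{L^2_\alpha}$ for every integer $M$: decomposing the leading part of the product $j_\alpha(2\pi x_n\xi)j_\alpha(2\pi y\xi)$ into the two oscillations $\cos(2\pi(x_n-y)\xi)$ and $\cos\bigl(2\pi(x_n+y)\xi-(2\alpha+1)\pi/2\bigr)$ with prefactor $(x_ny)^{-\alpha-1/2}\xi^{-2\alpha-1}$, and integrating by parts $M$ times against $g\in C^\infty_c((0,b))$, yields $|f_n(y)|\le C_M(x_ny)^{-\alpha-1/2}|x_n-y|^{-M}$ for $y\in I_n^c$; integrating against $y^{2\alpha+1}\,\d y$ gives the bound.
\end{enumerate}

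Using $\mu_\alpha(I_n)\asymp a\,x_n^{2\alpha+1}$ and combining (i)--(iii), the hypothesis \eqref{st} applied to $f_n$ yields
\[
c\;\le\;\frac{\mu_\alpha(\Omega\cap I_n)\,\|f_n\|^2_{L^\infty(I_n)}}{\|f_n\|^2_{L^2_\alpha}}+\frac{\|f_n\|^2_{L^2_\alpha(I_n^c)}}{\|f_n\|^2_{L^2_\alpha}}\;\le\;\frac{Ca}{n}+C_M a^{1-2M}.
\]
First fix $a$ and $M$ large enough that $C_Ma^{1-2M}<c/2$, then let $n\to\infty$ to force $Ca/n<c/2$: this gives $c<c$, the sought contradiction. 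The main technical obstacle is estimate (iii): even though $\phi$ itself decays pointwise, the heavy weight $y^{2\alpha+1}$ in $\d\mu_\alpha$ defeats naive tail bounds, so one must genuinely exploit the oscillation of $j_\alpha$ through a non-stationary-phase/integration-by-parts argument, which is precisely why $g$ is chosen in $C^\infty_c((0,b))$ rather than merely $L^2$.
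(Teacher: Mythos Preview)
Your contrapositive argument is sound and takes a genuinely different route from the paper's. The paper works \emph{directly}: after reducing to $b=1/(2\pi)$, it uses the explicit test functions $f_n=T^\alpha_{s'_n}j_{\alpha+1}$, where the $s'_n$ are the positive zeros of $j_{\alpha+1}$; the resulting closed form $f_n(x)=j_\alpha(s'_n)\,x^2 j_{\alpha+1}(x)/(x^2-s_n^{\prime 2})$ lets one prove the concentration on $[s'_n-a,s'_n+a]$ by elementary pointwise estimates (the mean value theorem together with \eqref{majbessel}), with no oscillatory-integral machinery. Relative density is then established directly at each $s'_n$ and transferred to all $x$ via the asymptotic spacing $s'_{n+1}-s'_n\to\pi$. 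Your approach trades this explicit Bessel algebra for a flexible non-stationary-phase argument that would adapt to more general settings; the price is the technical work in (iii), where one must also absorb the lower-order terms of the Bessel asymptotic expansion and treat the region $y\in[0,1]$ separately (the stated pointwise bound $(x_ny)^{-\alpha-1/2}\abs{x_n-y}^{-M}$ is not literally valid for small $y$, though the integrated estimate survives once one uses boundedness of $j_\alpha(2\pi y\xi)$ there and integrates by parts only against the $x_n$-oscillation).

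One simplification worth noting: your dichotomy into bounded and unbounded subsequences is unnecessary. Estimates (i)--(iii) hold uniformly for all $x_n\ge a$ once $a$ is chosen large, and your final inequality $c\le Ca/n+C_Ma^{1-2M}$ already gives the contradiction by letting $n\to\infty$, regardless of whether $(x_n)$ is bounded. In fact the bounded-case sketch as written is not quite complete: since $\norm{T^\alpha_{x^\ast}\phi}^2_{L^2_\alpha}$ is of order $(x^\ast)^{-(2\alpha+1)}$, it may be much smaller than the tail $\norm{\phi\chi_{[a,\infty)}}^2_{L^2_\alpha}$ when $x^\ast$ (which depends on $\Omega$ and is only constrained by $x^\ast\ge a$) happens to be large, so ``most of the mass of $\phi$ in $[0,a]$'' does not immediately translate into ``most of the mass of $T^\alpha_{x^\ast}\phi$ in $[x^\ast-a,x^\ast+a]$''.
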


\begin{proof}
By considering $\delta_{\frac{1}{2\pi b}}f$ instead of $f$ we can assume that $\ff_\alpha(f)$ is supported in  $[0,\frac{1}{2\pi}]$.
Now take
$$
f_0= \ff_\alpha\left(\vartheta_\alpha\chi_{[0,\frac{1}{2\pi}]}\right),\\ \mathrm{with} \ \ \vartheta_\alpha= (4\pi)^{\alpha+1}\Gamma(\alpha+2),
$$
then from \eqref{eqr1}
\begin{equation}\label{fzero}
 f_0(x)=j_{\alpha+1}(x).
\end{equation}
Let $s'_0 =0$ and denote by $0<s'_1 <s'_2 < \cdots$ the sequence of all nonnegative zeros of the function $j'_\alpha$.
Note that by \eqref{eqderive}, $(s'_n)_{1\le n\le \infty}$ is the sequence of nonnegative zeros of
the function $j_{\alpha+1}$ and the asymptotic form of $s'_n$ follows from \eqref{Ajalpha}
(see \cite[page 618]{watson}):
\begin{equation}\label{eqasump}
s'_n=\pi \left(n+\frac{2\alpha+1}{4} +O(n^{-1})\right).
\end{equation}
As a consequence, $s'_{n+1}-s'_n=\pi+O(n^{-1})$, thus there exists $n_0$, depending only on $\alpha$, such that, if $n\geq n_0$ then $s'_{n+1}-s'_n\leq 4$. In particular, if $x\geq s'_{n_0}$, there exists $n$ such that $|x-s'_n|\leq 2$

Let $f_n$ be the function defined by
\begin{equation}\label{eqfn}
f_n(x)= T^\alpha_{s'_n}f_0(x)=\ff_\alpha\left(\vartheta_\alpha j_\alpha(2\pi s'_n \,.)
\chi_{[0,\frac{1}{2\pi}]}\right)(x)
= j_\alpha(s'_n)\;\frac{x^2j_{\alpha+1}(x)}{x^2-s^{'2}_n},
\end{equation}
with \eqref{eqr3}.

In particular, by \eqref{fzero},  \eqref{eqr2} and \eqref{eqr3}, we have
\begin{equation}\label{eqs}
 \begin{cases}
f_0(s'_0)=1= \vartheta_\alpha^{-1}\|f_0\|^2_{L^2_\alpha},& n=0;\\
  f_n(s'_n)=(\alpha+1)j_\alpha(s'_n)^2= \vartheta_\alpha^{-1}\|f_n\|^2_{L^2_\alpha},& n\ge1;\\
 f_n(s'_k)=0, & n,\,k\ge1:\;n\neq k;\\
 f'_n(0)=0, &n\ge0.
\end{cases}
\end{equation}
First we will prove that there is an appropriate choice of $a$ such that
\eqref{dense} holds for $x=s'_n$.

But, let $a\geq s'_{n_0}$ be fixed, the precise value being given below. Let $n\ge 1$ be such that $s'_n\ge a$.
To simplify notation, write $s=s'_n$.
Then, from \eqref{majbessel} and \eqref{eqs}, we have
$$
\int_{0}^{s-a}|f_n(t)|^2\d\mu_\alpha(t)\le \frac{2\pi^{\alpha+1}c_\alpha^2}{\Gamma(\alpha+1)} j_\alpha(s)^2\int_0^{s-a}\frac{\d t}{(t-s)^2}\le
\frac{2\pi^{\alpha+1}c_\alpha^2}{\Gamma(\alpha+1)} j_\alpha(s)^2 \int_a^\infty \frac{\d t}{t^2}\le
C_{\alpha}\frac{\|f_n\|^2}{a},
$$
and
$$
\int_{s+a}^\infty|f_n(t)|^2\d\mu_\alpha(t) \le \frac{2\pi^{\alpha+1}c_\alpha^2}{\Gamma(\alpha+1)} j_\alpha(s)^2 \int_a^\infty \frac{\d t}{t^2}\le C_{\alpha}\frac{\|f_n\|^2}{a},
$$
where $C_{\alpha}=\frac{2\pi^{\alpha+1}c_\alpha^2}{\vartheta_\alpha\Gamma(\alpha+2)}$ is a constant that depends
only on $\alpha$.
Now if we take
$$
a=\max\left(5,s'_{n_0},4\dfrac{C_{\alpha}}{c}\right),
$$
so that $a$ depends only on $\alpha$ and $c$, then
\begin{equation}\label{eqcomp}
\int_{[s-a,s+a]^c} |f_n|^2\d\mu_\alpha\le \frac{c}{2}\|f_n\|^2.
\end{equation}

But then, it follows from \eqref{st} and \eqref{eqcomp} that
\begin{equation}\label{eqst2}
   \int_{\Omega\cap[s-a,s+a]}|f_n(t)|^2\d\mu_\alpha(t)\ge \frac{c}{2}\|f_n\|^2,
\end{equation}
which implies that
\begin{equation}\label{eqst3}
   \int_{\Omega\cap[s-a,s+a]}\abs{\frac{tj_{\alpha+1}(t)}{t-s}}^2\d\mu_\alpha(t)\ge \frac{(\alpha+1)c}{2\vartheta_\alpha}.
\end{equation}
On the other hand, by the mean value theorem, for every $t\in [s-a,s+a]$, there exists $u$ with $u\in[s,t]$ (or $[t,s]$) such that
$$
j_{\alpha+1}(t)=j_{\alpha+1}(s)+j'_{\alpha+1}(u)(t-s)=-\frac{uj_{\alpha+2}(u)}{2(\alpha+2)}(t-s)
$$
since $j_{\alpha+1}(s)=0$ and \eqref{eqderive}. It follows from \eqref{majbessel} that
\begin{eqnarray*}
\abs{\frac{tj_{\alpha+1}(t)}{t-s}}^2&=&\frac{|tuj_{\alpha+2}(u)|^2}{4(\alpha+2)^2}
\leq \frac{c_{\alpha+2}^2}{4(\alpha+2)^2}\frac{t^2u^2}{(1+u)^{2\alpha+5}}.
\end{eqnarray*}
Further, as $u\leq\max(s,t)\leq s+a\leq 2s$ and
$$
1+u\geq 1+s-a\geq\begin{cases}s-a \geq s/2,&\mbox{if }s\geq 2a;\\
1\geq s/2a,&\mbox{if }s\leq 2a;\end{cases}\geq \frac{s}{2a}
$$
we get
$$
\abs{\frac{tj_{\alpha+1}(t)}{t-s}}^2\leq \frac{2^{2\alpha+7}c_{\alpha+2}^2 a^{2\alpha+5}}{(\alpha+2)^2}s^{-2\alpha-1}.
$$
Inserting this into \eqref{eqst3}, we obtain
\begin{equation}\label{eqfnphil}
\mu_\alpha\bigl(\Omega\cap[s-a,s+a]\bigr)\ge \frac{(\alpha+2)^2(\alpha+1)c}{2^{2\alpha+8}a^{2\alpha+5}\vartheta_\alpha c_{\alpha+2}^2}s^{2\alpha+1}.
\end{equation}
On the other hand,
\begin{equation}\label{eqmes}
 \mu_\alpha([s-a,s+a])\le\frac{4a\pi^{\alpha+1}}{\Gamma(\alpha+1)}(s+a)^{2\alpha+1}\le \frac{2^{2\alpha+3}\pi^{\alpha+1}}{\Gamma(\alpha+1)}as^{2\alpha+1}.
\end{equation}

Comparing \eqref{eqmes} and \eqref{eqfnphil} shows that there exists a constant $\gamma>0$  depending only on $\alpha$ and $c$ such that
$$
\mu_\alpha(\Omega\cap[s-a,s+a])\ge \gamma \mu_\alpha([s-a,s+a]).
$$

Now let $x\ge a$, then there exists $s=s'_n\ge a$  such that $|x-s|<2$. Thus
\begin{eqnarray*}
 \mu_\alpha(\Omega\cap[x-a-2,x+a+2])&\ge&  \mu_\alpha(\Omega\cap[s-a,s+a]) \ge \gamma \mu_\alpha([s-a,s+a])\\
  &\ge& \gamma \mu_\alpha([x-a+2,x+a-2])\ge \gamma' \mu_\alpha([x-a-2,x+a+2]),
\end{eqnarray*}
with $ 0<\gamma'<\gamma$. Here we used the fact that $a\geq 5$ and that $\mu_\alpha$ is a doubling measure.  This finishes the proof of the lemma.
\end{proof}

We are now in position to prove our main theorem:

\begin{theorem}\ \\
\label{Sereda}
Let $\alpha\geq 0$ and let $a,b,\gamma>0$.
Let $f \in L^2_\alpha (\R^+)$ such that $\supp \ff_\alpha (f) \subset [0,b]$. If $\Omega$ is a
$(\gamma,a)$-relatively dense subset of $\;\R^+$, then
\begin{equation}\label{eq:sereda}
 \norm{f}^2_{ L^2_\alpha(\Omega)} \geq
 \frac{2}{3}\left(\frac{\gamma}{300\times 9^\alpha}\right)^{\frac{160\sqrt{3}\pi}{\ln2}ab+\alpha\frac{\ln3}{\ln2}+1}\norm{f}^2_{ L^2_\alpha}.
\end{equation}
\end{theorem}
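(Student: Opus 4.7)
My plan is to adapt Kovrijkine's proof of the classical Logvinenko-Sereda theorem (\cite{Ko}) to the weighted Fourier-Bessel setting; the key quantitative inputs are the Bernstein inequality (\ref{bernstein}) and a Remez-type polynomial inequality.

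First, I would tile $[a,\infty)$ by the intervals $I_n=[(2n-1)a,(2n+1)a]$, $n\geq 1$, each of centre $x_n=2na\geq a$. The relative-density hypothesis forces $\mu_\alpha(\Omega\cap I_n)\geq\gamma\mu_\alpha(I_n)$, while the weight $x^{2\alpha+1}$ varies on $I_n$ by a factor at most $3^{2\alpha+1}$, so $d\mu_\alpha|_{I_n}$ is comparable to Lebesgue measure up to a constant depending only on $\alpha$ -- this is the source of the $9^\alpha$ term in the exponent of (\ref{eq:sereda}). The leftover interval $[0,a]$ is absorbed by enlarging $I_1$ to $[0,3a]$ and using that $f$ is a smooth even entire function.

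Next I would apply a Kovrijkine-type good/bad dichotomy: call $I_n$ \emph{bad} if
$$\sum_{k=0}^\infty H^{-2k}\frac{\Gamma(\alpha+k+1)}{\Gamma(\alpha+1)(\sqrt{\pi^3}\,b)^{2k}}\|D^kf\|_{L^2_{\alpha+k}(I_n)}^2\geq 2\|f\|_{L^2_\alpha(I_n)}^2$$
for a threshold $H>1$ to be chosen. Bernstein's inequality bounds the sum of the left-hand side over all $n$ by a geometric series in $H^{-2}$ times $\|f\|_{L^2_\alpha}^2$; a Chebyshev-type argument then forces the total $L^2_\alpha$-mass of $f$ on bad intervals to be at most $\|f\|_{L^2_\alpha}^2/3$, provided $H$ is fixed large enough.

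On a good interval, the Bernstein bounds together with the identity $\mathcal{P}D^k=\partial^k\mathcal{P}$ imply that $f$, regarded as $\mathcal{P}f(x^2)$, is approximated on $I_n$ by an even polynomial $Q_n$ of degree $2N$ with $N=\lceil Cab\rceil$ and with tail error a tiny multiple of $\|f\|_{L^2_\alpha(I_n)}$. I would then invoke the classical Remez-Tur\'an inequality: since $\Omega\cap I_n$ has Lebesgue density at least $\gamma/3^{2\alpha+1}$ in $I_n$,
$$\sup_{I_n}|Q_n|\leq\left(\frac{C\cdot 9^\alpha}{\gamma}\right)^{2N}\sup_{\Omega\cap I_n}|Q_n|.$$
Converting back to $L^2_\alpha$ norms using the weight comparison, summing over good intervals and combining with the bad-interval estimate produces the announced bound; tracking the numerical constants through Bernstein and Remez explains the exponents $\frac{160\sqrt{3}\pi}{\ln 2}ab$ (with the $\sqrt{3}$ traceable to the Bernstein factor $\sqrt{\pi^{3}}=\pi^{3/2}$) and $\alpha\frac{\ln 3}{\ln 2}$, and the factor $2/3$ in front of the right-hand side of (\ref{eq:sereda}) is exactly the residual good-interval mass.

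The main obstacle will be Step 3: passing from the global $L^2_\alpha$-norm Bernstein bounds on $D^kf$ to local pointwise (or $L^\infty(I_n)$) control of the Taylor approximation, while managing that the natural Taylor variable for $\mathcal{P}f$ is $x^2$ rather than $x$ -- this is why the effective polynomial has degree $2N$, which doubles the Remez cost and ultimately sets the numerical constant $160\sqrt{3}\pi$. Handling the weight $x^{2\alpha+1}$ carefully enough that the final exponent acquires only the clean factor $\alpha\frac{\ln 3}{\ln 2}$, and absorbing the neighbourhood of the origin (where relative density may fail) without losing this clean structure, is the other delicate point.
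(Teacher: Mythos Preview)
Your plan follows the same Kovrijkine strategy as the paper --- good/bad dichotomy driven by Bernstein's inequality, then a Remez-type estimate on good intervals --- but the execution differs in two places.

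First, the paper makes the change of variable $s=x^2$, $g=\pp f$ at the outset and works on the squared intervals $I_x=[(x-1)^2,(x+1)^2]$ with the weight $s^\alpha$; this is exactly what turns Bernstein's inequality \eqref{bernstein} into the flat-looking bound $\int|\partial^k g|^2 s^{\alpha+k}\,\mathrm{d}s\le(\pi ab)^{2k}\int|g|^2 s^\alpha\,\mathrm{d}s$ and makes the later Taylor expansion in the single variable $s$ clean. You keep the $x$-variable and rely on $\pp D^k=\partial^k\pp$ implicitly; this is equivalent, but the paper's route makes the bookkeeping of the weight and of the factor $9^\alpha$ more transparent.

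Second, and more substantively, on good intervals you propose to truncate to a polynomial of degree $2N\sim Cab$ and apply the classical Remez--Tur\'an inequality in $\sup$-norm. The paper instead uses Kovrijkine's \emph{analytic} Remez theorem (Theorem~\ref{th:ko} here): on a good interval it locates a single point $t_x$ at which every $\partial^k g(t_x)$ is controlled, sums the Taylor series on the complex disc $D_{I_x}$ to bound $M=\max_{D_{I_x}}|g|$, and feeds $M/m$ directly into the $L^2$ inequality \eqref{eq:ko}. This avoids having to control the truncation error pointwise on $\Omega\cap I_n$ and gives $L^2$--$L^2$ bounds without a separate sup-to-$L^2$ conversion. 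Your polynomial route is viable (it is essentially Kovrijkine's first proof in \cite{Ko}), but the ``main obstacle'' you flag --- passing from local $L^2$ Bernstein control to pointwise Taylor bounds --- is precisely what the good-point claim in the paper's proof handles, and you would end up reproving it.

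One small correction: the $\sqrt{3}$ in the exponent does not come from $\sqrt{\pi^3}$. In the paper it arises from the constant $12=4\cdot 3$ in the good-point claim (the $3$ is chosen so that $\sum 3^{-k}=3/2<4$), giving $(12\pi^2 a^2b^2)^{k/2}=(2\sqrt{3}\,\pi ab)^k$ in the Taylor estimate.
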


The proof here is inspired by O. Kovrijkine's proof and improvement of Logvinenko-Sereda's Theorem \cite{Ko}.

\begin{proof}
First, we will reduce the problem by proving the following:

\medskip

\noindent{\bf Claim.}\ \\ {\sl Fix $\gamma>0$. It is enough to prove that,
there is a function $\psi\,:\R^+\to\R^+$ such that, if $\Omega$ is $(\gamma,1)$-relatively dense
and $f\in L^2_\alpha(\R^+)$ with $\supp\ff_\alpha(f)\subset[0,ab]$,  then}
\begin{equation}
\label{eq:LSabs}
\norm{f}^2_{ L^2_\alpha(\Omega)} \geq
\psi(ab)\norm{f}^2_{ L^2_\alpha}.
\end{equation}

\begin{proof}[Proof of the claim] As
$\mu_\alpha([0,ab])=\dst\frac{\pi^{\alpha+1}}{\Gamma(\alpha+2)}(ab)^{2(\alpha+1)}= a^{2\alpha+2}\mu_\alpha([0,b])$,
we may write $\psi(ab)=\ffi\bigl(\mu_\alpha([0,ab])\bigr)$ where $\ffi(s)=\dst\psi\left(\frac{\bigl(\Gamma(\alpha+2)s\bigr)^{1/(2\alpha+2)}}{\pi^{1/2}}\right)$.

Assume now that an inequality of the form
\begin{equation}
\label{eq:LSabs2}
\norm{f}^2_{ L^2_\alpha(\Omega)} \geq
\psi(ab)\norm{f}^2_{ L^2_\alpha}
\end{equation}
holds,
for  every $f\in L^2_\alpha(\R^+)$ with $\supp\ff_\alpha(f)\subset[0,ab]$ and
every $(\gamma,1)$-relatively dense subset $\Omega$ of $\R^+$.

Now let $a>0$ and let $\tilde\Omega$ be a $(\gamma,a)$-relatively dense subset of $\R^+$. Then
 $\Omega=\{ x/a\,:\ x\in \tilde\Omega\}$ is $(\gamma,1)$-relatively dense in $\R^+$. On the other hand, if
 $f\in L^2_\alpha(\R^+)$ is a function with  $\supp \ff_\alpha(f)\subset[0,b]$, then as $\ff_\alpha \delta_{a^{-1}}= \delta_{a}\ff_\alpha $, we have
 $\supp \ff_\alpha\left(\delta_{a^{-1}} f\right)\subset [0,ab]$ and   $\norm{f}_{L^2_\alpha(\tilde\Omega)}=\norm{\delta_{a^{-1}} f}_{L^2_\alpha(\Omega)}.$
 It follows then from Inequality \eqref{eq:LSabs2} that

\medskip
$
\norm{f}^2_{L^2_\alpha(\tilde\Omega)}=\norm{\delta_{a^{-1}} f}^2_{L^2_\alpha(\Omega)}
\geq \psi(ab)\norm{\delta_{a^{-1}}f}^2_{ L^2_\alpha}
=\ffi\bigl(a^{2\alpha+2}\mu_\alpha([0,b])\bigr)\norm{f}^2_{ L^2_\alpha}.$
\end{proof}

We will now reformulate the problem so as to be able to apply our Bernstein type inequality.

Let $\Omega'\subset\R^+$ be a subset defined by the relation $\Omega=\{x\geq 0\,:\ x^2\in \Omega'\}$ and
$\d\nu_\alpha(s) =\frac{\pi^{\alpha+1}}{\Gamma(\alpha+1)} s^\alpha \d s$. Then condition \eqref{dense} is equivalent to
\begin{equation}\label{dense2}
 \nu_\alpha \Big( \Omega'\cap [(x-1)^2,(x+1)^2]\Big) \geq  \gamma  \nu_\alpha([(x-1)^2,(x+1)^2])
\end{equation}
for all $x \ge 1$.
Finally, let $g=\pp f$ \ie $f(x)=g(x^2)$.

Let us first reformulate what we want to prove. A simple
change of variables shows that to show \eqref{eq:sereda} it is enough to prove an inequality of the form
\begin{equation}\label{eq:sereda2}
\int_{\Omega'}|g(s)|^2s^\alpha\d s\geq \frac{2}{3}\left(\frac{\gamma}{300\times 9^\alpha}\right)^{\frac{160\sqrt{3}\pi}{\ln2}ab+\alpha\frac{\ln3}{\ln2}+1}
\int_0^\infty|g(s)|^2s^\alpha\d s.
\end{equation}
Note that
\begin{equation}\label{eqm}
\norm{g}^2_{L^2_s}=\int_0^\infty \abs{g (s)}^2 s^{\alpha} \d s=\frac{\Gamma(\alpha+1)}{\pi^{\alpha+1}} \norm{f}^2_{L^2_\alpha}.
\end{equation}

\smallskip

We will now reformulate Bernstein's Inequality. First, a simple computation shows that
\begin{eqnarray*}
 \norm{D^k f}_{L^2_{\alpha+k}}^2&=&
\frac{2\pi^{\alpha+k+1}}{\Gamma(\alpha+k+1)}\int_0^\infty \abs{D^k f (t)}^2 t^{2(\alpha+k)+1}\d t\\
&=& \frac{\pi^{\alpha+k+1}}{\Gamma(\alpha+k+1)}\int_0^\infty \abs{\pp D^k f (s)}^2 s^{\alpha+k}\d s\\
&=& \frac{\pi^{\alpha+k+1}}{\Gamma(\alpha+k+1)}\int_0^\infty \abs{ \partial^k\pp f (s)}^2 s^{\alpha+k} \d s.
\end{eqnarray*}
Then by \eqref{bernstein} and \eqref{eqm}, Bernstein's Inequality  reads
\begin{equation}\label{Bernstein}
\int_0^\infty \abs{ \partial^k g (s)}^2 s^{\alpha+k} \d s
\leq (\pi ab)^{2k} \int_0^\infty \abs{g (s)}^2 s^{\alpha} \d s.
\end{equation}

\begin{definition}\ \\
We will say that  $x\ge1$ and that the corresponding interval $I_x=[(x-1)^2,(x+1)^2]$
are \emph{bad} if there exists $k \geq 1$ such that
$$
\int_{I_x}  \abs{ \partial^{k} g(s)}^2 s^{\alpha+k}\d s
 \geq \left(2\pi ab\right)^{2k} \int_{I_x} \abs{ g(s)}^2
s^{\alpha}\d s.
$$
\end{definition}
Let us now show that the bad intervals only count for a fraction of the norm of $g$:
\begin{eqnarray*}
\int_{\cup_{x \; is \; bad}I_x}\abs{ g(s)}^2 s^{\alpha}\d s
&\leq & \sum _{k\geq 1} \frac{1}{(4\pi^2 a^2b^2)^{k}}\int_{\dst\cup_{x \; is \; bad}I_x}\abs{ \partial^{k} g(s)}^2 s^{\alpha+k}\d s\\
&\leq & \sum _{k\geq 1} \frac{1}{(4\pi^2 a^2b^2)^{k}}\int_{0}^\infty\abs{ \partial^{k} g(s)}^2 s^{\alpha+k}\d s\\
&\leq&\sum _{k\geq 1} \frac{1}{4^{k}}  \int_0^{\infty} \abs{g(s)}^2 s^{\alpha}\d s
= \frac{1}{3}\int_0^{\infty} \abs{g(s)}^2 s^{\alpha}\d s,
\end{eqnarray*}
where we have used Bernstein's Inequality \eqref{Bernstein} in the last line.
Therefore
\begin{equation}
\label{sumgood}
\int_{\dst\cup_{x \; is \; good}I_x}\abs{ g(s)}^2 s^{\alpha}\d s  \geq\frac{2}{3}\int_{0}^{\infty} \abs{g(s)}^2 s^{\alpha}\d s.
\end{equation}

\medskip

\noindent{\bf Claim.}\\
{\sl If $I_x$ is a good interval then there exists $t_x \in I_x $ with the property that
for every $k\geq 0$,}
$$
t_x^{\alpha+k}\abs{\partial^{k} g(t_x)}^2
\leq\left(12 \pi^2 a^2b^2\right)^k\int_{I_x} \abs{ g(s)}^2 s^{\alpha}\d s.
$$

\medskip

\begin{proof}[Proof of the claim]
Suppose towards a contradiction that this is not true. Then for every
$t \in I_x$, there exists $ k_t \geq 0$ such that
$$
\int_{I_x} \abs{ g(s)}^2 s^{\alpha}\d s \leq \frac{1}{(12\pi^2 a^2b^2)^{k_t}}
t^{\alpha+k_t}\abs{ \partial^{k_t} g(t)}^2 ,
$$
therefore
$$
\int_{I_x} \abs{ g(s)}^2 s^{\alpha}\d s  \leq \sum _{k\geq 0} \frac{1}{(12 \pi^2a^2b^2)^{k}}
t^{\alpha+k}\abs{ \partial^{k} g(t)}^2.
$$
Integrating both sides over $I_x$, we get
$$
4\int_{I_x}\abs{ g(s)}^2 s^{\alpha}\d s
\leq \sum _{k\geq 0} \frac{1}{(12 \pi^2a^2b^2)^k}
\int_{I_x}\abs{ \partial^{k} g(t)}^2t^{\alpha+k}\d t.
$$
As $x$ is good, we deduce that
$$
4 \int_{I_x}\abs{ g(s)}^2 s^{\alpha}\d s \leq \sum _{k\geq 0} \frac{1}{3^k} \int_{I_x}\abs{g(t)}^2t^{\alpha}\d t
=\frac{3}{2}\int_{I_x}\abs{ g(s)}^2 s^{\alpha}\d s,
$$
which gives a contradiction.
\end{proof}

\medskip

The next step in the proof is the following  straightforward adaptation of a
result of O. Kovrijkine \cite[Corollary, p 3041]{Ko}:

\begin{theorem}[Kovrijkine \cite{Ko}]\label{th:ko}\ \\
Let $\Phi$ be an analytic function, $I$ an interval and $J\subset I$ a set of positive
measure.
Let $M=\max_{D_{I}}|\Phi(z)|$ where $D_{I}=\{z\in\C, dist(z,I)<4|I|\}$ and let $m=\max_{I}|\Phi(x)|$, then
\begin{equation}
\label{eq:ko}
\int_I|\Phi(s)|^2\d s\leq\left(\frac{300|I|}{|J|}\right)^{\frac{2\ln M/m}{\ln 2}+1}\int_J|\Phi(s)|^2\d s.
\end{equation}
\end{theorem}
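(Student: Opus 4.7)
The plan is to prove Kovrijkine's inequality by reducing the analytic estimate to a polynomial one via a good approximation, and then applying a polynomial $L^2$-Remez inequality. After an affine change of variable (which preserves every quantity appearing in the statement, since $|I|$, $|J|$, $D_I$, $M$ and $m$ all transform consistently), we may normalize $I$ to a standard interval such as $[-1,1]$, so that $D_I=\{z\in\C\,:\,\mathrm{dist}(z,I)<8\}$.

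The first ingredient is a good polynomial approximation of $\Phi$ on $I$. Expanding $\Phi$ in Taylor series around the midpoint $x_c$ of $I$, Cauchy's estimate on a disk of radius $R$ slightly less than $4|I|$ (which sits inside $D_I$) yields $|\Phi^{(k)}(x_c)|/k!\le M R^{-k}$. Since $|x-x_c|\le |I|/2$ for $x\in I$, the degree-$n$ Taylor polynomial $P_n$ obeys
$$
\|\Phi-P_n\|_{L^\infty(I)}\le \sum_{k>n}M\bigl(|I|/(2R)\bigr)^k=O(M\cdot 8^{-n}).
$$
Choosing $n$ of order $\log_2(M/m)$ with the appropriate multiplicative constant ensures that $\|\Phi-P_n\|_{L^\infty(I)}$ is suitably small compared with $m$; this is exactly where the exponent $2\log_2(M/m)+1=2n+1$ appearing in the statement will originate.

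The second ingredient is the classical $L^2$-Remez inequality for polynomials: for any polynomial $P$ of degree at most $n$ and any measurable $J\subset I$ with $|J|>0$,
$$
\int_I |P(s)|^2\,\d s\le \left(\frac{C_0|I|}{|J|}\right)^{2n+1}\int_J |P(s)|^2\,\d s
$$
with $C_0$ an absolute constant (of order $4$). Applying this to $P=P_n$ together with two $L^2$ triangle inequalities produces
$$
\int_I|\Phi|^2\,\d s\le C\left(\frac{C_0|I|}{|J|}\right)^{2n+1}\int_J|\Phi|^2\,\d s+\text{(error terms)},
$$
where the error terms come from the $L^2(I)$ and $L^2(J)$ norms of $\Phi-P_n$ and are controlled by $\|\Phi-P_n\|_{L^\infty(I)}^2\,|I|$ and the analogous quantity on $J$.

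The main obstacle — and the most delicate step — is to absorb these error terms into a fraction of $\|\Phi\|_{L^2(I)}^2$ on the left-hand side, so that the final inequality has the form claimed. For this one uses an a priori lower bound on $\|\Phi\|_{L^2(I)}$: since $|\Phi(x_0)|=m$ at some $x_0\in I$ and $|\Phi'|\le M/R$ on $I$ by Cauchy, continuity forces $|\Phi|\ge m/2$ on an interval of length proportional to $m|I|/M$ around $x_0$, yielding $\|\Phi\|_{L^2(I)}^2\gtrsim m^3|I|/M$. Balancing the Taylor-approximation error, the Remez factor and this lower bound fixes $n$ at order $\log_2(M/m)$ with the correct constant; careful bookkeeping of all absolute constants through this procedure then accounts for the factor $300$ in the base and the precise exponent $2\ln(M/m)/\ln 2+1$ in the statement.
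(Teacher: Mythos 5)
The paper does not actually prove this statement: it is quoted verbatim from Kovrijkine, whose argument is quite different from yours. Kovrijkine bounds the number of zeros of $\Phi$ in a suitable disk by $\log_2(M/m)$ via Jensen's formula, factors $\Phi$ \emph{exactly} as that polynomial times a zero-free analytic function, applies the sup-norm Remez (Chebyshev) inequality to the polynomial factor and a Harnack-type max/min bound to the zero-free factor, and then passes from the resulting sup-norm estimate to the $L^2$ statement by restricting to the sublevel set $E=\{x\in J:\ |\Phi(x)|^2\le \frac{2}{|J|}\int_J|\Phi|^2\}$, which has $|E|\ge|J|/2$; this is where the exponent $\frac{2\ln (M/m)}{\ln 2}+1$ and the constant $300$ come from, and no approximation error appears anywhere.

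Your route has a genuine gap at exactly the step you call delicate: the error terms cannot be absorbed. With $n\asymp\log_2(M/m)$ the Taylor error $\epsilon:=\|\Phi-P_n\|_{L^\infty(I)}\approx M8^{-n}\approx m^3/M^2$ is independent of $|J|$, but after applying the polynomial Remez inequality on $J$ it re-enters multiplied by the Remez factor, contributing a term of size $\left(\frac{C_0|I|}{|J|}\right)^{(2n+1)/2}\epsilon\,|J|^{1/2}$, which blows up as $|J|/|I|\to0$. Absorbing it into the left-hand side using your lower bound $\|\Phi\|_{L^2(I)}\gtrsim m^{3/2}|I|^{1/2}M^{-1/2}$ would require $\left(C_0|I|/|J|\right)^{n}\lesssim (M/m)^{3/2}=2^{3n/2}$, i.e. $|J|\gtrsim|I|$ up to an absolute constant (with $C_0\approx4$ it is essentially never satisfiable), and you cannot dominate it by the main term either, since $\Phi$ may have a zero inside $I$ and $J$ may sit on it, giving $\int_J|\Phi|^2\sim|J|^3\ll\epsilon^2|J|$. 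Increasing $n$ does not help: the combined factor $\left(C_0|I|/(8|J|)\right)^{n}$ grows with $n$ as soon as $|J|<C_0|I|/8$, and a larger $n$ would in any case overshoot the claimed exponent. So your argument only covers the regime where $|J|$ is a definite fraction of $|I|$, whereas the whole point of the statement --- and of its use in the paper, where $|J|/|I|$ is of order $\gamma$ and can be arbitrarily small --- is the polynomial dependence on $|J|/|I|$. Any repair must avoid multiplying an additive approximation error by the Remez factor, e.g. by replacing the Taylor truncation with the exact zero-counting factorization above and only then invoking the Chebyshev sublevel-set trick to get the $L^2$ form.
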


We will apply Theorem \ref{th:ko} with $I=I_x$, $x\ge2$, a good interval, $J=\Omega'\cap I_x$ and $\Phi=g$.
 Let $N=\norm{g}_{L^2_\alpha(I_x)}$
and note that
$$
N^2=\int_{(x-1)^2}^{(x+1)^2}|g(s)|^2s^\alpha\d s\leq (x+1)^{2\alpha}\max_{I_x}|g(s)|^2
$$
{\it i.e.}
\begin{equation}\label{eqmm}
1/m\leq (x+1)^{\alpha}/N.
\end{equation}

Let us now estimate $M$. Since $x$ is good, we can use the claim
to estimate the power series of $g$: if $t\in D_{I_x}$ then $|t-t_x|\leq 5|I_x|=20x$ thus
\begin{eqnarray*}
|g(t)|&\leq&\sum_{k=0}^\infty\frac{|\partial^kg(t_x)|}{k!}|t-t_x|^k\\
&\leq&\frac{1}{t_x^{\alpha/2}}\sum_{k=0}^\infty\frac{1}{k!}\left(40\sqrt{3}\pi ab\right)^k \left(\frac{x}{\sqrt{t_x}}\right)^k\norm{g}_{L^2_s(I_x)}\\
&\leq&\frac{1}{t_x^{\alpha/2}}\sum_{k=0}^\infty\frac{1}{k!}\left(80\sqrt{3}\pi ab\right)^k N\\
&\leq&\frac{N}{t_x^{\alpha/2}}e^{80\sqrt{3}\pi a b}.
\end{eqnarray*}
In particular by \eqref{eqmm}, we have
$$\frac{M}{m}\le \frac{(x+1)^{\alpha}}{t_x^{\alpha/2}}e^{80\sqrt{3}\pi ab}$$
and then
\begin{equation}\label{eq:bbb}
\ln \frac{M}{m}\leq 80\sqrt{3}\pi ab+\frac{\alpha}{2}\ln\frac{(x+1)^2}{t_x}\le  80\sqrt{3}\pi ab+\alpha\ln3
\end{equation}
since $t_x\ge(x-1)^2$.

Now,  as
$$
\int_{\Omega'\cap I_x}\abs{g(s)}^2 s^{\alpha}\d s
\geq(x-1)^{2\alpha}\int_{\Omega'\cap I_x}\abs{g(s)}^2 \d s,
$$
we deduce from \eqref{eq:ko} that
\begin{eqnarray}
\int_{\Omega'\cap I_x}\abs{g(s)}^2 s^{\alpha}\d s&\geq& (x-1)^{2\alpha}\left(\frac{\bigl| \Omega'\cap I_x\bigr|}{300|I_x|}\right)^{\frac{2\ln M/m}{\ln 2}+1}
\int_{I_x}\abs{g(s)}^2 \d s\nonumber\\
&\geq&\left(\frac{x-1}{x+1}\right)^{2\alpha}\left(\frac{\bigl|\Omega'\cap I_x\bigr|}{300|I_x|}\right)^{\frac{2\ln M/m}{\ln 2}+1}\int_{I_x}\abs{g(s)}^2 s^{\alpha}\d s\nonumber\\
&\geq&3^{-2\alpha}
\left(\frac{\bigl|\Omega'\cap I_x\bigr|}{300|I_x|}\right)^{\frac{160\sqrt{3}\pi}{\ln2} ab +\frac{\alpha\ln3}{\ln2}}\int_{I_x}\abs{g(s)}^2 s^{\alpha}\d s
\label{eq:aaa}
\end{eqnarray}
where we have used that $\frac{x-1}{x+1}\geq \frac{1}{3}$ for $x\geq 2$,
$\frac{\bigl|\Omega'\cap I_x\bigr|}{300|I_x|}\leq 1$ and \eqref{eq:bbb} in the last inequality.
Since
$$
\bigl|\Omega'\cap I_x\bigr|\ge \frac{\Gamma(\alpha+1)}{\pi^{\alpha+1}}(x+1)^{-2\alpha} \nu_\alpha \Big(\Omega'\cap I_x \Big)
$$
we get
$$
\bigl|\Omega'\cap I_x\bigr|\ge\left(\frac{x-1}{x+1}\right)^{2\alpha}|I_x|\gamma\geq 3^{-2\alpha}|I_x|\gamma.
$$
Integrating this into \eqref{eq:aaa}, we obtain
$$
\int_{\Omega'\cap I_x}\abs{g(s)}^2 s^{\alpha}\d s\ge3^{-2\alpha(\frac{160\sqrt{3}\pi}{\ln2}ab+\alpha\frac{\ln3}{\ln2}+1)}
\left(\frac{\gamma}{300}\right)^{\frac{160\sqrt{3}\pi}{\ln2}ab+\alpha\frac{\ln3}{\ln2}}\times\int_{I_x}\abs{g(s)}^2 s^{\alpha}\d s.
$$
This leads to
\begin{equation}\label{eqsai}
\int_{\Omega'\cap I_x}\abs{g(s)}^2 s^{\alpha}\d s\ge\left(\frac{\gamma}{300\times 9^\alpha}\right)^{\frac{160\sqrt{3}\pi}{\ln2}ab+\alpha\frac{\ln3}{\ln2}+1}
\times\int_{I_x}\abs{g(s)}^2 s^{\alpha}\d s.
\end{equation}

Finaly, summing over all good intervals and applying \eqref{sumgood}, we have
\begin{eqnarray*}
\int_{\Omega'}\abs{g(s)}^2 s^{\alpha}\d s&\ge& \int_{ \Omega'\cap\dst\cup_{x\;is\; good} I_x }\abs{g(s)}^2 s^{\alpha}\d s\\
&\geq& \left(\frac{\gamma}{300\times 9^\alpha}\right)^{\frac{160\sqrt{3}\pi}{\ln2}ab+\alpha\frac{\ln3}{\ln2}+1}
\int_{\dst\cup_{x\;is\; good} I_x }\abs{g(s)}^2 s^{\alpha}\d s\\
&\geq& \frac{2}{3}\left(\frac{\gamma}{300\times 9^\alpha}\right)^{\frac{160\sqrt{3}\pi}{\ln2}ab+\alpha\frac{\ln3}{\ln2}+1}\int_0^\infty\abs{g(s)}^2 s^{\alpha}\d s.
\end{eqnarray*}
We have thus proved \eqref{eq:sereda2} and the proof of Theorem \ref{Sereda} is complete.
\end{proof}

\end{document}